\newcommand{\Lie}[1]{\operatorname{\textsl{#1}}}
\newcommand{\lie}[1]{\operatorname{\mathfrak{#1}}}
\theoremstyle{plain}
\newtheorem{lemma}{Lemma}[section]
\newtheorem{corollary}[lemma]{Corollary}
\newtheorem{theorem}[lemma]{Theorem}
\newtheorem{remark}[lemma]{Remark}
\newcommand{\Gtwo}{\ifmmode{{\rm G}_2}\else{${\rm G}_2$}\fi}
 \newcommand{\cyclic}{\mathop{\kern0.9ex{{+}\kern-2.2ex\raise-.28ex\hbox{\Large\hbox
 {$\circlearrowright$}}}}}
\DeclareMathOperator{\tr}{tr}
\newcommand{\lto}{\longrightarrow}
\newcommand{\sumz}{\sum_{i\in \mathbb Z_7}} 
\def\sideremark#1{\ifvmode\leavevmode\fi\vadjust{\vbox to0pt{\vss
 \hbox to 0pt{\hskip\hsize\hskip1em
 \vbox{\hsize2.5cm\tiny\raggedright\pretolerance10000
 \noindent #1\hfill}\hss}\vbox to8pt{\vfil}\vss}}}%
\newfont{\eusm}{eusm10 scaled \magstep1}
\newfont{\eusmiii}{eusm10 scaled \magstep3}
\newcommand{\w}{{\mathchoice{\,{\scriptstyle\wedge}\,}{{\scriptstyle\wedge}}
      {{\scriptscriptstyle\wedge}}{{\scriptscriptstyle\wedge}}}}
\title{Remarks on some integral formulas for $\Lie{G}_2$-structures}
\author{Francisco~Mart\'\i n~Cabrera}
\address[Francisco~Mart\'\i n~Cabrera]{Departamento de Matemáticas, Estadística e Investigación   Operativa \\
  Universidad de La Laguna\\ 38200 La Laguna, Tenerife, Spain}
\email{fjmartincabrera@gmail.com}
\begin{document}

\maketitle

\markboth{\protect\small \sc  francisco martín cabrera}
         {\protect\small \sc Remarks on some  integral formulas for $\Lie{G}_2$-structures}

\begin{abstract}{\indent}
For seven-dimensional Riemannian manifolds equipped with a $\Lie G_2$-structure, we show in a full detailed way that all  integral formulas and  divergence equations, given by  diverse  authors,  are agree with the ones displayed here in terms of the intrinsic torsion of the $\Lie G_2$-structure. Likewise the components of such an intrinsic torsion  is expressed by means of exterior algebra.

\vspace{3mm}

 \noindent {\footnotesize \emph{Keywords and phrases:} $G$-structure,
  intrinsic torsion, minimal connection, $\Lie G_2$-structure, vector cross product, $\Lie G_2$-map } \vspace{3mm}

\noindent {\footnotesize \emph{2000 MSC}: 53C10, 53C15} 
\end{abstract}

\tableofcontents

\section{Introduction}{\indent} 

By using different tools, diverse authors, Bryant in \cite{RB}, Bor and Hernández Lamoneda in  \cite{BHLS1}, Friedrich and Ivanov in \cite{FI2} and Niedziałomski in \cite{KN2},  have deduced integral formulas and divergence equations for Riemannian manifolds equipped with a $\Lie G_2$-structure. They express the scalar curvature of the manifold in terms   related with components  of the intrinsic torsion of the $\Lie G_2$-structure. Such terms are different in each formula. Thus it is natural to ask for the way  in which all mentioned formulas are agree and the correlation  between them.

Our purpose here is to show in a  detailed way that all integral formulas, given by  the above  mentioned authors,  are agree with 
\begin{equation*}
 \int_M s \, \mathrm{vol}_M =  \int_M  9 \|\xi_{(1)} \|^2  - \frac32 \|\xi_{(2)} \|^2  - \frac32 \|\xi_{(3)} \|^2+  \frac{15}2 \|\xi_{(4)} \|^2 \; \mathrm{vol}_M,
\end{equation*} 
where $s$ is the scalar curvature,  $M$ is  a closed  manifold equipped with a $\Lie G_2$-structure and $\xi$ denotes the intrinsic torsion. The terms $\xi_{(i)}$ denote  $\Lie G_2$-components of $\xi$. Likewise, we also prove that all divergence equations, given by such authors, are agree with   
$$
   s = 9 \|\xi_{(1)} \|^2  - \frac32 \|\xi_{(2)} \|^2  - \frac32 \|\xi_{(3)} \|^2+  \frac{15}2 \|\xi_{(4)} \|^2 
   - 6\,  \mathrm{div} \, \sumz \xi_{e_i} e_i.
$$

Correlations between the   integral formulas and  between the divergence equations have already been pointed out by   Niedziałomski in \cite{KN2}. Here we give  a  description  of them.  This is done following  notations for $\Lie G_2$-structures used by Fernández and Gray in \cite{FG} and by the present author in \cite{FMC00, FMC0, FMC}. In our view, the present text illustrates the advantages of such  notations.        
\vspace{2mm} 
  
   Using the map $\xi \lto -\xi \varphi = \nabla \varphi$, where $\varphi$ is the fundamental three-form of the $\Lie G_2$-structure, it is found that all information about the intrinsic torsion is contained in the covariant derivative $\nabla \varphi$. From this it is deduced    that all information about the intrinsic torsion  is contained in the exterior derivatives $d \varphi$ and $d\star \varphi$ ($\star$ denotes Hodge star operator). This is described  in \cite{MMS} (see  also Table I in \cite{FMC0}). Thus a useful alternative way to find such information is by studying $d \varphi$ and $d\star \varphi$. Because all the necessary tools are displayed in first sections, in last section  we take the  chance  to  express the components of  the intrinsic torsion $\xi$ in terms of $d \varphi$ and of  $d \star \varphi$.

\section{Preliminaries}{\indent} \setcounter{equation}{0} \label{preli}
Let $\{ e_0, \dots, e_6 \}$  denote the standard positive orthonormal basis of $\mathbb R^7$. The exceptional Lie group   $\Lie G_2$  is the subgroup of   $\Lie{SO}(7)$ consisting of those elements which fix the three-form $\varphi$  defined  by
\begin{equation}\label{fundphi}
\varphi = \sum_{i \in \mathbb Z_7} e_i^* \w e_{i+1}^* \wedge e_{i+3}^*,
\end{equation}
where we have  denoted by $e_i^*$ the dual one-forms of $e_i$.  A \emph{Cayley  basis} is a $ \{ u_0^*, \dots, u_6^* \}$ for covectors such that, in terms of this basis,   $\varphi$ is written as in \eqref{fundphi}. The name Cayley basis is used  because,  in this context, it is well known the identification of $ \mathbb R^7$ with the imaginary octonions or Cayley numbers $\mathrm{Im} \, \mathbb O$.
  The dual basis, denoted  by $\{u_0, \dots , u_6 \}$ of a Cayley basis,  is also called {\it Cayley basis} for vectors.  Thus the standard basis $\{ e_0, \dots, e_6 \}$  is a Cayley basis.

Here we fix   $\mathrm{Vol} = e_0^* \w \dots \w e_6^*$ as   volume form and, denoting the corresponding Hodge star operator by $\star$, the four-form 
$$
\star \varphi = - \sum_{i \in \mathbb Z_7} e_{i+2}^* \w e_{i+4}^* \wedge e_{i+5}^* \w e_{i+6}^*
$$ 
will play a relevant rôle along the present text.
\vspace{2mm}

The euclidean metric $\langle \cdot , \cdot \rangle$ on $\mathbb R^7$ induces the following inner product on skew symmetric $p$-forms:
$$
(\alpha , \beta) = \frac{1}{p!} \sum_{i_1, \dots , i_p\in \mathbb Z_7} \alpha(e_{i_1}, \dots  , e_{i_p}) \beta(e_{i_1}, \dots  , e_{i_p}),
$$
for all $\alpha, \beta \in \Lambda^p \mathbb R^{7*}$. The corresponding norm of  $(\cdot , \cdot )$ will be denoted by $|\cdot |$.  The inner product $(\cdot , \cdot )$  is widely used in the literature and satisfies the identity 
$$
\alpha \w \star \beta =   (\alpha, \beta) \, \mathrm{Vol}.  
$$
It follows $(\varphi, \varphi) = (\star \varphi, \star \varphi) = 7$ and  $ \mathrm{Vol} = \frac17 \,\varphi\, \w \star \varphi$. Note also that $(\star \alpha, \star \beta) = (\alpha,\beta)$. 
\vspace{2mm}

For general tensors, it is used the inner product $\langle \cdot, \cdot \rangle$  given by 
$$
\langle \Psi , \Phi \rangle  = \sum_{i_1, \dots , i_r,j_1, \dots, j_s  \in \mathbb Z_7} 
 \Psi^{i_1 \dots i_r}_{j_1 \dots j_r}  \Phi^{i_1 \dots i_r}_{j_1 \dots j_r},
$$
for all $(r,s)$-tensors   $\Psi$  and  $\Phi$ on $\mathbb R^7$, where $\Psi^{i_1 \dots i_r}_{j_1 \dots j_r}$ and   $\Phi^{i_1 \dots i_r}_{j_1 \dots j_r}$ are computed by using an orthonormal basis for vectors and its dual one. The corresponding norm of  $\langle \cdot , \cdot \rangle$ will be denoted by $\|\cdot \|$.

\begin{remark}{\rm 
Let us recall the well known  converse fact that if one has  a three-form $\varphi$ on $\mathbb R^7$ and a basis $\{u_0, \dots , u_6\}$ for vectors in $\mathbb R^7$ such that $\varphi$ is expressed  as in \eqref{fundphi}, i.e.
$$
\varphi = \sum_{i \in \mathbb Z_7} u_i^* \w u_{i+1}^* \wedge u_{i+3}^*,
$$
 then the group $\Lie G_2 = \{ a \in \Lie{GL}(7) \, | \, a^* \varphi = \varphi \}$ preserves the metric such  that   $\{u_0, \dots , u_6\}$ is orthonormal and the volume form defined by $\mathrm{Vol} = u_0^*\w \dots \w u_6^*$. It is also denoted by $\Lie G_2$ because it is isomorphic to the one above mentioned. 
}
\end{remark}

Using $\varphi$, a two-fold vector cross product $\times$ is defined by 
$$
(u, v) \longrightarrow u \times v = \sum_{i\in Z_7} \varphi ( u, v, e_i) e_i\in \mathbb R^7
$$    
(\cite{BG},\cite{Gray:vector},\cite{FG}).  This product is skew-symmetric and satisfies 
$$
\langle u \times v, u \rangle =  \langle u\times v, v \rangle=0, \qquad \| u \times v \|^2 = 
\| u  \| \| v \| - \langle u , v \rangle^2,  
$$ 
for all $u,v \in \mathbb R^7$, which are the conditions of the definition of such a product. From these initial conditions,   the following identities are derived 
\begin{eqnarray}
\langle u \times v , w\rangle & = & \langle u , v\times w\rangle,\nonumber\\
u \times (u\times v) & = &  \langle u, v\rangle u - \| u \|^2 v,\label{uu1} \\
u \times (v\times w) & = & -   v \times (u\times w)  + \langle v, w\rangle u +\langle w, u\rangle v  -2 \langle u, v \rangle w \label{uu2}
\end{eqnarray} 

Note that for a Cayley basis, as  $\{ e_0, \dots , e_6\}$, one has
$$
e_i \times e_{i+1} = e_{i+3}, \quad e_{i+3} \times e_{i} = e_{i+1}, \quad e_{i+1} \times e_{i+3} = e_{i},\quad  e_i \times e_i =0, \qquad i \in\mathbb Z_7.
$$

Moreover,    $\Lie G_2$ can be also described as  the subgroup of   $\Lie{SO}(7)$ consisting of those elements which fix the product $\times$, i.e.
$$
\Lie G_2 = \left\{ g \in \Lie{SO}(7) \, | \, g (u \times v ) = g u \times g v, \, \forall \, u, v \in \mathbb R^7\right\}.
$$
As a consequence, the Lie algebra $\lie g_2$ is described by 
$$
\lie g_2 = \left\{ a \in \lie{so}(7) \, | \, a (u \times v ) = a (u) \times v + u \times a(v), \, \forall \, u, v \in \mathbb R^7\right\}.
$$
Even   the following description is also valid  
$$
\lie g_2 = \left\{ a \in \lie{gl}(7) \, | \, a (u \times v ) = a (u) \times v + u \times a(v), \, \forall \,u, v \in \mathbb R^7\right\}.
$$ 
By  considering the identities
$$
a( e_i) = a(e_{i+1}\times e_{i+3}) = a(e_{i+1}) \times e_{i+3} + e_{i+1} \times a( e_{i+1}), \qquad i \in \mathbb Z_7,
$$
and,  denoting $a_{i\, j} = \langle e_i , a(e_j) \rangle$ and so $a= \sum_{i,j \in \mathbb Z_7} a_{i\, j } e_i \otimes e^*_j$, one deduces
$$
\lie g_2 = \left\{ a \in \lie{gl}(7) \, | \, a_{i+1\, i+3} + a_{i+4\, i+5} + a_{i+2\, i+6} =0,\quad a_{i \,j} = - a_{j\, i}  , \, \forall\,  i, j \in \mathbb Z_7\right\}.
$$

Under the action of $\Lie G_2$  given  by $(g a)(v) = g a (g^{-1} v)$, for all $v \in \mathbb R^7$,  the space  $\lie{gl}(7) \cong \mathbb R^{7}\otimes \mathbb R^{7*}$ ($\cong \mathbb R^{7*}\otimes \mathbb R^{7*}$ by using $\langle \cdot, \cdot  \rangle$)   is decomposed into the following irreducible $\Lie G_2$-modules
$$
\lie{gl}(7) = \mathbb R \,\mathrm{Id}_{\mathbb R^7} \oplus S_0^2 \mathbb R^{7*} \oplus \lie g_2 \oplus \lie g_2^\perp,
$$
where 
\begin{eqnarray*}
\mathbb R \,\mathrm{Id}_{\mathbb R^7} & = & \left\{ a \in \lie{gl}(7) \, | \, a = \lambda \,\mathrm{Id}_{\mathbb R^7}, \quad \lambda \in \mathbb R\right\},
\\
S_0^2 \mathbb R^{7*} &= & \left\{ a \in \lie{gl}(7) \, | \, a_{i \, j}   = a_{j \, i}, \; \textstyle \sum_{i\in \mathbb Z_7} a_{i \, i} =0, \; \forall i,j \,\in \mathbb Z_7   \right\},
\\
 \lie g_2^\perp & = & \left\{ a \in \lie{gl}(7) \, | \, a_{i \, j}   = - a_{j \, i}, \;  a_{i+1 \,  i+3} = a_{i+4 \,  i+5} = a_{i+2 \,  i+6},\; \forall\, i,j \in \mathbb Z_7   \right\}.
\end{eqnarray*}
In the literature about $\Lie G_2$-structures, it is frequently denoted $\lie X_1 =  \mathbb R \,\mathrm{Id}_{\mathbb R^7}$, $\lie X_3 = S_0^2 \mathbb R^{7*}$, $\lie X_2 = \lie g_2$ and  $\lie X_4 = \lie g_2^\perp$.  
\vspace{2mm} 

For   $a \in \lie g_2^\perp$, one can consider the vector $v = \sum_{i\in \mathbb  Z_7} v_i e_i$, where $v_i =  a_{i+1 \,  i+3} = a_{i+4 \,  i+5} = a_{i+2 \,  i+6}$. It is found that
$a_{i\, j} = v \lrcorner\varphi(e_i,e_j)$, where $\lrcorner$ denotes the interior product. Thus one has a map $\lie g_2^\perp \longrightarrow \mathbb R^7$, $ a \to v $. Reciprocally, for all $v \in   \mathbb R^7$, one has the endomorphism $A_v$ such that
$$
(A_v)_{i+1\, i+3} = (A_v)_{i+4\, i+5} = (A_v)_{i+2\, i+6} = v_i, \qquad (A_v)_{i\, j} = - (A_v)_{j\, i},
 $$
 for all  $i,j \in \mathbb Z_7$. The map $v \to A_v$ is the converse of the previous one $a \lto v$. Moreover, from the identity $A_v(u) = u \times v$, one easily deduces that $v \to A_v$  is  a $\Lie G_2$-map. Then $\lie g_2^\perp \cong \mathbb R^7$ as $\Lie G_2$-modules.
 \vspace{1mm}
 
 For a general $a \in \lie{gl}(7)$, we will determine  its four $\Lie G_2$-components $a_{(1)}$, $a_{(2)}$, $a_{(3)}$ and  $a_{(4)}$ corresponding to the $\Lie G_2$-modules $\lie X_1$, $\lie X_2$, $\lie X_3$ and $\lie X_4$, respectively. Also the norms of $a_{(i)}$ will  be computed.

 To obtain  $a_{(4)}$, one considers the map $p: \lie{gl}(7)  \lto \mathbb R^7$ (already considered in \cite{RB,KN2}) defined by 
 $$
 p(a) = \textstyle  \sum_{i\in Z_7} (a_{i+1 \,  i+3} - a_{i+3 \,  i+1} + a_{i+4 \,  i+5} - a_{i+5 \,  i+4} + a_{i+2 \,  i+6} - a_{i+6 \,  i+2}) e_i.
 $$    
 We will denote $p(a)_i = a_{i+1 \,  i+3} - a_{i+3 \,  i+1} + a_{i+4 \,  i+5} - a_{i+5 \,  i+4} + a_{i+2 \,  i+6} - a_{i+6 \,  i+2}$.
It follows that, for all $ v  \in \mathbb R^7$, 
$
p(A_v) = 6 v. 
$
Therefore,
\begin{equation}\label{a4}
a_{(4)} = \frac16 A_{p(a)}.
\end{equation}
In a more developed way, $a_{(4)}$ is given by {\small
$$
6a_{(4)}  =    \sumz p(a)_i ( e^{}_{i+1} \otimes e_{i+3}^* - e^{}_{i+3} \otimes e_{i+1}^*
+ e^{}_{i+4} \otimes e_{i+5}^* - e^{}_{i+5} \otimes e_{i+4}^* + e^{}_{i+2} \otimes e_{i+6}^* - e^{}_{i+6} \otimes e_{i+2}^*).
$$  }
 Using \eqref{uu1}, one has 
 \begin{eqnarray*}
 36 \| a_{(4)} \|^2 & = &\textstyle  \sum_{i\in Z_7}  \langle e_i \times p(a) , e_i \times p(a) \rangle = -    \sum_{i\in Z_7}  \langle e_i,   p(a) \times ( p(a) \times e_i) \rangle \\
  & = & - ( \| p(a) \|^2 - 7 \| p(a) \|^2 ) = 6 \| p(a) \|^2.
 \end{eqnarray*} 
  Thus it is obtained 
  $$
  \|   a_{(4)} \|^2 = \frac16 \| p(a) \|^2.
  $$

 It is obvious that 
 \begin{equation}\label{a1}
 a_{(1)} = \frac17 \mathrm{tr}(a)  \mathrm{Id}_{\mathbb R^7}, \qquad \|  a_{(1)}\|^2 = \frac17 (\mathrm{tr}(a))^2,
 \end{equation}
where $\mathrm{tr}(a) = \sum_{i\in \mathbb Z_7}  a_{i \, i}$. 
Taking this into account, one has 
\begin{align}
a_{(3)}= &\textstyle \sumz  (a_{i \, i} - \tfrac17 \mathrm{tr}(a)) e_i \otimes e_i^*
 +\displaystyle  \frac12 \textstyle   \sum_{i, j \in \mathbb Z_7, \, i \ne j} (a_{i \, j} + a_{j \, i}) (e_i \otimes e_j^* +  e_j \otimes e_i^*). \label{a3dos}
 \end{align}
 Therefore,
 \begin{align}
\| (a_{(3)}\|^2  = &\,   \frac12 \textstyle \sum_{i\in \mathbb Z_7} \left(  (a_{i+1 \, i+3} + a_{i+3 \, i+1})^2 +  (a_{i+4 \, i+5} + a_{i+5 \, i+4})^2 +  (a_{i+2 \, i+6} + a_{i+6 \, i+2})^2 \right)\nonumber
\\
 &+ \textstyle  \sum_{i\in \mathbb Z_7}  (  a_{i \, i} - \tfrac17 \mathrm{tr}(a))^2. \label{comp3norm}
\end {align}
 One can check that $\| a_{(1)} + a_{(3)} \|^2 = \| a_{(1)} \|^2  + \|a_{(3)} \|^2$ 
as it is expected.
\vspace{2mm}
 
For $a_{(2)}$, one has 
\begin{eqnarray}
 6 a_{(2)} & = &\textstyle   \sumz (3(a_{i+1\, i+3} - a_{i+3\, i+1}) -  p(a)_i) (e_{i+1} \otimes e_{i+3}^*- e_{i+3} \otimes e_{i+1}^*)\nonumber \\
  & & +\textstyle  \sumz (3(a_{i+4\, i+5} - a_{i+5\, i+4}) -  p(a)_i) (e_{i+4} \otimes e_{i+5}^*- e_{i+5} \otimes e_{i+4}^*) \label{a2uno}\\
 && +  \textstyle  \sumz (3(a_{i+2\, i+6} - a_{i+6\, i+2}) -  p(a)_i) (e_{i+2} \otimes e_{i+6}^*- e_{i+6} \otimes e_{i+2}^*).  \nonumber
\end{eqnarray}
Therefore,
\begin{align}
\| a_{(2)} \|^2  = &  \frac1 2 \textstyle  \sumz \left((a_{i+1\, i+3} - a_{i+3\, i+1})^2+ (a_{i+4\, i+5} - a_{i+5\, i+4})^2 + (a_{i+2\, i+6} - a_{i+6\, i+2})^2\right) \nonumber \\
 & -  \frac1{6}   \|p(a)\|^2. \label{comp2norm}
\end{align}
Note also  that $\| a_{(2)} + a_{(4)}\|^2 = \| a_{(2)} \|^2 + \|a_{(4)}\|^2$ as it is expected. 
\vspace{2mm}

\section{Some invariants  of an endomorphism in  $\lie{gl}(7)$ under the action of $\Lie G_2$}
For our purposes, it is necessary  to consider several invariants (used in \cite{KN2}) of an endomorphism $a \in  \lie{gl}(7)$ under the action of $\Lie G_2$. There  are some of them which are also  invariant under  the wider  action of  $\Lie{SO}(7)$.

A first one is the norm of $a$. In fact, $\| a \|^2 = \| g a \|^2$, for all $g \in  \Lie{SO}(7)$. Other invariants are the coefficients  of the characteristic polynomial. They are denoted by $\sigma_i(a)$, where
$$
\det (a - \lambda \, \mathrm{Id}) = \textstyle  \sum_{i=1}^7 (-1)^i \sigma_{7-i}(a) \lambda^i.
$$ 
Thus, $\sigma_0(a) =1$,  $\sigma_1(a) = \tr (a)$, $\sigma_2(a) = \tfrac12\sum_{i,j \in \mathbb Z_7 \, i \ne j} (a_{i\, i} a_{j \, j} - a_{i\, j} a_{j \, i} )$.
 \vspace{2mm} 
  
In relation with the $\Lie G_2$-components of $a$, one has the following  result.
\begin{lemma} \label{lesa1} For all $a \in \lie{gl}(7)$, it is satisfied 
\begin{eqnarray*}
 (\sigma_1(a))^2 & = &  7 \| a_{(1)} \|^2,\\
 2 \sigma_2(a) & = & 6  \| a_{(1)} \|^2  +  \| a_{(2)} \|^2 -  \| a_{(3)} \|^2+    \| a_{(4)} \|^2.
\end{eqnarray*}
\end{lemma}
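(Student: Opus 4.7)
The first identity is essentially immediate from \eqref{a1}: since $\sigma_1(a) = \mathrm{tr}(a)$, squaring the second relation in \eqref{a1} gives $(\sigma_1(a))^2 = (\mathrm{tr}(a))^2 = 7\|a_{(1)}\|^2$. So all the work is in the second identity.

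For the second identity, the plan is to use the classical relation $2\sigma_2(a) = (\mathrm{tr}(a))^2 - \mathrm{tr}(a^2)$, reducing the problem to evaluating $\mathrm{tr}(a^2)$ in terms of the $\Lie G_2$-components. The key observation is that $\mathrm{tr}(a^2)$ depends only on the decomposition of $a$ into its symmetric and skew-symmetric parts; it is blind to the further $\Lie G_2$-refinement. More precisely, write $a = a_{s} + a_{a}$ with $a_{s} = a_{(1)} + a_{(3)}$ the symmetric part and $a_{a} = a_{(2)} + a_{(4)}$ the skew-symmetric part. Then, expanding $\mathrm{tr}(a^2) = \sum_{i,j}a_{ij}a_{ji}$, I would verify the three standard identities
\begin{equation*}
\mathrm{tr}(a_{s}^2) = \|a_{s}\|^2, \qquad \mathrm{tr}(a_{a}^2) = -\|a_{a}\|^2, \qquad \mathrm{tr}(a_{s}a_{a}) = 0,
\end{equation*}
the last one following because the product of a symmetric and a skew-symmetric matrix has vanishing trace. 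Hence $\mathrm{tr}(a^2) = \|a_{s}\|^2 - \|a_{a}\|^2$.

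Next, I would invoke the orthogonality of the $\Lie G_2$-module decomposition $\lie{gl}(7) = \lie X_1 \oplus \lie X_2 \oplus \lie X_3 \oplus \lie X_4$ with respect to $\langle\cdot,\cdot\rangle$. Since $\lie X_1$ and $\lie X_3$ sit inside the symmetric endomorphisms while $\lie X_2 = \lie g_2$ and $\lie X_4 = \lie g_2^\perp$ sit inside the skew-symmetric ones, this orthogonality gives $\|a_{s}\|^2 = \|a_{(1)}\|^2 + \|a_{(3)}\|^2$ and $\|a_{a}\|^2 = \|a_{(2)}\|^2 + \|a_{(4)}\|^2$, which is already implicit in the discussion around \eqref{comp3norm} and \eqref{comp2norm}.

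Assembling these ingredients and using the first identity to rewrite $(\mathrm{tr}(a))^2 = 7\|a_{(1)}\|^2$, one obtains
\begin{equation*}
2\sigma_2(a) = 7\|a_{(1)}\|^2 - \|a_{(1)}\|^2 - \|a_{(3)}\|^2 + \|a_{(2)}\|^2 + \|a_{(4)}\|^2 = 6\|a_{(1)}\|^2 + \|a_{(2)}\|^2 - \|a_{(3)}\|^2 + \|a_{(4)}\|^2,
\end{equation*}
as claimed. I do not anticipate a serious obstacle here; the only mildly delicate point is being careful about the sign change produced by skew-symmetry when converting $\mathrm{tr}(a_{a}^2)$ into a norm, which is what forces the $-\|a_{(3)}\|^2$ versus $+\|a_{(2)}\|^2 + \|a_{(4)}\|^2$ pattern in the final formula.
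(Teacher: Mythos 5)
Your proposal is correct and follows essentially the same route as the paper: the paper likewise reduces the second identity to the symmetric/skew-symmetric split by computing $\|a_{(1)}+a_{(3)}\|^2$ and $\|a_{(2)}+a_{(4)}\|^2$ in components and recognizing their difference as $\sigma_1(a)^2-2\sigma_2(a)$, which is just your Newton-identity computation $2\sigma_2(a)=(\mathrm{tr}\,a)^2-\mathrm{tr}(a^2)$ written out in coordinates. Your packaging via $\mathrm{tr}(a_s^2)=\|a_s\|^2$, $\mathrm{tr}(a_a^2)=-\|a_a\|^2$, $\mathrm{tr}(a_s a_a)=0$ is slightly cleaner but not a genuinely different argument.
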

\begin{proof}
The expression for $ (\sigma_1(a))^2$  follows directly from \eqref{a1}. To see the expression for $\sigma_2(a)$, taking
\begin{eqnarray*}
\| a_{(2)} + a_{(4)}\|^2 &= & \frac12\textstyle  \sum_{i,j \in \mathbb Z_7 \, i \ne j} (a_{i\,j}^2 + a_{j\,i}^2) 
-\displaystyle  \frac12\textstyle    \sum_{i,j \in \mathbb Z_7 \, i \ne j} a_{i\,j} a_{j\,i},
\\
\| a_{(1)} + a_{(3)}\|^2 & = & \frac12 \textstyle \sum_{i,j \in \mathbb Z_7 \, i \ne j} (a_{i\,j}^2 + a_{j\,i}^2) +  \displaystyle \frac12 \textstyle  \sum_{i,j \in \mathbb Z_7 \, i \ne j} a_{i\,j} a_{j\,i} + \sumz a_{i\,i}^2 
\end{eqnarray*}
 into account,    it is obtained  
$$
\textstyle \| a_{(2)}\|^2  + \|a_{(4)}\|^2- \| a_{(1)}\|^2  -  \|a_{(3)}\|^2 = -   \sum_{i,j \in \mathbb Z_7 \, i \ne j} a_{i\,j} a_{j\,i} -   \sumz a_{i\,i}^2. 
$$
Since $\sigma_1(a)^2 =  \sumz a_{i\,i}^2 +  \sum_{i,j \in \mathbb Z_7 \, i \ne j} a_{i\,i} a_{j\,j}$, one deduces
$$
\| a_{(2)}\|^2  + \|a_{(4)}\|^2- \| a_{(1)}\|^2  -  \|a_{(3)}\|^2 =  2 \sigma_2(a) - \sigma_1(a)^2. 
$$
Finally, using  $\sigma_1(a)^2 = 7  \| a_{(1)}\|^2$, the identity for $\sigma_2(a)$ is obtained.
\end{proof}
\vspace{2mm}

Now, let us  restrict our attention  to the action of $\Lie G_2$. There are  the following  invariants associated with an endomorphism $a$:
\begin{eqnarray*}
i_0(a) & = &\textstyle  \sum_{i,j \in \mathbb Z_7} \langle a(e_i) \times  a(e_j), e_i \times e_j \rangle,\\
i_1(a) & = & \textstyle \sum_{i,j \in \mathbb Z_7} \langle a(e_i) \times e_i, a(e_j)  \times e_j \rangle,\\
i_2(a) & = &\textstyle  \sum_{i,j \in \mathbb Z_7} \langle a(e_i) \times e_j, a(e_j)  \times e_i \rangle.
 \end{eqnarray*}
\begin{lemma} \label{leia2}For all $a \in \lie{gl}(7)$, it is satisfied 
\begin{eqnarray*}
i_0(a) & = &  6 \|a_{(1)} \|^2 +  3 \|a_{(2)} \|^2 -  \|a_{(3)} \|^2-  3 \|a_{(4)} \|^2, \\
i_1(a)  &  = & 6 \|a_{(4)} \|^2,\\
i_2(a )& = &- 6  \|a_{(1)} \|^2 +  3 \|a_{(2)} \|^2 +  \|a_{(3)} \|^2 - 3 \|a_{(4)} \|^2,\\
6 \| a \|^2 & = & \textstyle \sum_{i , j \in \mathbb Z_7} \| e_i \times a(e_j) \|^2.
\end{eqnarray*}
\end{lemma}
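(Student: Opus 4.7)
Each of the four identities expresses a $\Lie G_2$-invariant quadratic form on $\lie{gl}(7)$ along the isotypic decomposition $\lie X_1\oplus\lie X_2\oplus\lie X_3\oplus\lie X_4$. The four summands have pairwise distinct dimensions $1,14,27,7$, so Schur's lemma is freely available. We treat the identities in increasing order of difficulty.

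For the last identity, the polarisation $\|u\times v\|^2 = \|u\|^2\|v\|^2 - \langle u,v\rangle^2$ gives $\|e_i\times a(e_j)\|^2 = \|a(e_j)\|^2 - a_{i\,j}^2$, so summing over $i,j\in\mathbb Z_7$ produces $7\|a\|^2 - \|a\|^2 = 6\|a\|^2$. For $i_1(a)$, note that $i_1(a) = \|q(a)\|^2$ with $q(a):=\sumz a(e_i)\times e_i$. Because $q\colon\lie{gl}(7)\to\mathbb R^7$ is $\Lie G_2$-equivariant and $\mathbb R^7$ is isomorphic only to the summand $\lie X_4$, Schur's lemma forces $q$ to vanish on $\lie X_1\oplus\lie X_2\oplus\lie X_3$ and to be proportional on $\lie X_4$ to the isomorphism $A_v\mapsto v$. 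A direct check with $a = A_v$, using \eqref{uu1} in the form $(e_i\times v)\times e_i = v - \langle e_i,v\rangle e_i$, gives $q(A_v) = 6v = p(A_v)$, so $q=p$ identically and $i_1(a) = \|p(a)\|^2 = 6\|a_{(4)}\|^2$ by the norm computation that appears just before \eqref{a1}.

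For $i_0(a)$, use $\langle u\times v,w\rangle = \langle u,v\times w\rangle$ to rewrite $\langle a(e_i)\times a(e_j),e_i\times e_j\rangle = \langle a(e_i),a(e_j)\times(e_i\times e_j)\rangle$ and expand the inner triple via \eqref{uu2} with $u=a(e_j)$, $v=e_i$, $w=e_j$. Pairing with $a(e_i)$ and summing over $i,j\in\mathbb Z_7$, the first term becomes $-i_1(a)$ (after applying $\langle u,v\times w\rangle = \langle u\times v,w\rangle$ again) and the remaining three terms yield $\|a\|^2 + (\tr a)^2 - 2\sum_{i,j\in\mathbb Z_7} a_{i\,j}a_{j\,i}$. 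The auxiliary identity
\begin{equation*}
\sum_{i,j\in\mathbb Z_7} a_{i\,j}a_{j\,i} = \|a_{(1)}\|^2 + \|a_{(3)}\|^2 - \|a_{(2)}\|^2 - \|a_{(4)}\|^2
\end{equation*}
is nothing but the rearrangement of the two expressions for $\|a_{(2)}+a_{(4)}\|^2$ and $\|a_{(1)}+a_{(3)}\|^2$ displayed inside the proof of Lemma \ref{lesa1}. Substituting this, $(\tr a)^2 = 7\|a_{(1)}\|^2$ from \eqref{a1}, $\|a\|^2 = \sum_{k}\|a_{(k)}\|^2$, and the value of $i_1(a)$ just found, and then collecting coefficients, gives the stated formula for $i_0(a)$. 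The proof for $i_2(a)$ is entirely parallel: apply \eqref{uu2} to $e_j\times(a(e_j)\times e_i)$ with $u=e_j$, $v=a(e_j)$, $w=e_i$; the sum of the first terms collapses to $i_0(a)$ after using the antisymmetry $e_j\times e_i = -e_i\times e_j$, and the remaining terms combine to $\sum_{i,j} a_{i\,j}a_{j\,i} + \|a\|^2 - 2(\tr a)^2$. Substituting the already-computed $i_0(a)$ then yields the formula.

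The only real difficulty is the combinatorial bookkeeping in the double application of \eqref{uu2}: one must carefully track the signs produced by the antisymmetry of $\times$ and by relabelling of the summation indices in order to recognise each resulting sum as one of $-i_1(a)$, $i_0(a)$, $(\tr a)^2$ or $\sum a_{i\,j}a_{j\,i}$. Once this identification is made, no tool beyond \eqref{uu1}, \eqref{uu2} and Lemma \ref{lesa1} is required.
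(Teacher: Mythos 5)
Your proof is correct, but it takes a genuinely different route through the four identities than the paper does. The paper's order of reductions is: $i_1(a)$ and $i_2(a)$ are each computed by a full brute-force expansion over the Cayley multiplication table (splitting the double sum into diagonal and off-diagonal blocks and recognising the resulting quadratic expressions in the entries $a_{i\,j}$ as $\|p(a)\|^2$, resp.\ as a combination of the component norms), and only then is $i_0(a)$ obtained from $i_2(a)$ via \eqref{uu2}, which produces exactly the relation $i_0(a)=i_2(a)-\|a\|^2+2\sigma_2(a)+\sigma_1(a)^2$ of Lemma \ref{leia3}. You instead reverse the chain: the observation that $i_1(a)=\bigl\|\sumz a(e_i)\times e_i\bigr\|^2$ is the squared norm of a $\Lie G_2$-equivariant map into $\mathbb R^7$, combined with Schur's lemma and the single evaluation $q(A_v)=6v$, replaces the longest computation in the paper's proof with two lines (and in fact makes explicit the equivariance of $p$ that the paper uses tacitly when it writes \eqref{a4}); you then derive $i_0$ from $i_1$ and $i_2$ from $i_0$ by two further applications of \eqref{uu2}, each time absorbing the trace terms via the auxiliary identity $\sum_{i,j}a_{i\,j}a_{j\,i}=\|a_{(1)}\|^2+\|a_{(3)}\|^2-\|a_{(2)}\|^2-\|a_{(4)}\|^2$, which is indeed just the difference of the two displays in the proof of Lemma \ref{lesa1}. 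I verified the bookkeeping in both applications of \eqref{uu2} (the signs $-i_1(a)$ and $+i_0(a)$ for the leading terms, and the tails $\|a\|^2+(\tr a)^2-2\sum a_{i\,j}a_{j\,i}$ and $\sum a_{i\,j}a_{j\,i}+\|a\|^2-2(\tr a)^2$) and the final coefficient collection; everything matches. What your approach buys is the complete elimination of the Cayley-table expansions, at the cost of invoking irreducibility of the four summands; what the paper's approach buys is explicit entrywise formulas for the intermediate sums, which it reuses later when expressing $d\varphi$ and $d\star\varphi$ in terms of $T$.
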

\begin{proof}
For $i_1(a)$, one has the identity
\begin{align*}
i_1(a)  = &\textstyle  \sum_{i \in \mathbb Z_7} \langle a(e_i) \times e_i, a(e_i)  \times e_i \rangle
 + 2 \sum_{i \in \mathbb Z_7} \langle a(e_{i+1}) \times e_{i+1}, a(e_{i+3})  \times e_{i+3} \rangle\\
 & +2\textstyle  \sum_{i \in \mathbb Z_7} \langle a(e_{i+4}) \times e_{i+4}, a(e_{i+5})  \times e_{i+5} \rangle
 +2 \sum_{i \in \mathbb Z_7} \langle a(e_{i+2}) \times e_{i+2}, a(e_{i+6})  \times e_{i+6} \rangle.
\end{align*} 
Now we will compute  all these summands. By one hand, one has 
$$
\textstyle \sum_{i \in \mathbb Z_7} \langle a(e_i) \times e_i, a(e_i)  \times e_i \rangle = - \sum_{i \in \mathbb Z_7}\langle a(e_i),  e_i \times (e_i \times a(e_i)) \rangle.
$$
Therefore, by using \eqref{uu1}, it is obtained
$$
\textstyle \sum_{i \in \mathbb Z_7} \langle a(e_i) \times e_i, a(e_i)  \times e_i \rangle =   \| a \|^2 - \sum_{i \in \mathbb Z_7} a_{i\,i}^2. 
$$
By the another hand, one has 
\begin{eqnarray*}
\textstyle  \sum_{i \in \mathbb Z_7} \langle a(e_{i+1}) \times e_{i+1}, a(e_{i+3})  \times e_{i+3} \rangle & =&  - \textstyle \sumz a_{i+1\,i+3}  a_{i+3\,i+1}  - \sumz  a_{i+3\,i+1}   a_{i+2\,i+6} \\
 & & -\textstyle \sumz  a_{i+5\,i+4}   a_{i+2\,i+6} 
 +  \textstyle  \sumz  a_{i+1\,i+3}   a_{i+4\,i+5} \\
 && +\textstyle  \sumz  a_{i+5\,i+4}   a_{i+6\,i+2},
\end{eqnarray*}
\begin{eqnarray*}
\textstyle  \sum_{i \in \mathbb Z_7} \langle a(e_{i+4}) \times e_{i+4}, a(e_{i+5})  \times e_{i+5} \rangle & =&  - \textstyle \sumz a_{i+4\,i+5}  a_{i+5\,i+4}  - \sumz  a_{i+1\,i+3}   a_{i+6\,i+2} \\
 & & + \textstyle \sumz  a_{i+4\,i+5}   a_{i+2\,i+6} 
 +   \textstyle \sumz  a_{i+3\,i+1}   a_{i+6\,i+2} \\
 && -\textstyle  \sumz  a_{i+1\,i+3}   a_{i+4\,i+5},
\end{eqnarray*}
\begin{eqnarray*}
\textstyle  \sum_{i \in \mathbb Z_7} \langle a(e_{i+2}) \times e_{i+2}, a(e_{i+6})  \times e_{i+6} \rangle & =&  -\textstyle  \sumz a_{i+2\,i+6}  a_{i+6\,i+2}  - \sumz  a_{i+3\,i+1}   a_{i+4\,i+5} \\
 & & +\textstyle  \sumz  a_{i+1\,i+3}   a_{i+2\,i+6} 
 -  \textstyle  \sumz  a_{i+4\,i+5}   a_{i+6\,i+2} \\
 && - \textstyle \sumz  a_{i+3\,i+1}   a_{i+5\,i+4}.
\end{eqnarray*}
From these identities, it follows
\begin{eqnarray*}
i_1(a) & = & \textstyle \sumz (a_{i+1\,i+3} -a_{i+3\,i+1})^2 +  \sumz (a_{i+4\,i+5} -a_{i+5\,i+4})^2
+ \sumz (a_{i+2\,i+6} -a_{i+6\,i+2})^2\\
& & + 2 \textstyle  \sumz (a_{i+1\,i+3} -a_{i+3\,i+1}) (a_{i+4\,i+5} -a_{i+5\,i+4})\\
&&  +  2 \textstyle  \sumz (a_{i+1\,i+3} -a_{i+3\,i+1}) (a_{i+2\,i+6} -a_{i+6\,i+2})\\
& &  +  2 \textstyle   \sumz (a_{i+4\,i+5} -a_{i+5\,i+4}) (a_{i+2\,i+6} -a_{i+6\,i+2})\\
& = & \| p(a) \|^2 = 6 \| a_{(4)} \|^2.
  \end{eqnarray*} 
For $i_2(a)$, one has the identity
\begin{align*}
i_2(a)  = &\textstyle  \sum_{i \in \mathbb Z_7} \langle a(e_i) \times e_i, a(e_i)  \times e_i \rangle
 + 2 \sum_{i \in \mathbb Z_7} \langle a(e_{i+1}) \times e_{i+3}, a(e_{i+3})  \times e_{i+1} \rangle\\
 & +2 \textstyle \sum_{i \in \mathbb Z_7} \langle a(e_{i+4}) \times e_{i+5}, a(e_{i+5})  \times e_{i+4} \rangle
 +2 \sum_{i \in \mathbb Z_7} \langle a(e_{i+2}) \times e_{i+6}, a(e_{i+6})  \times e_{i+2} \rangle.
\end{align*} 
The first sum was already computed above. The following three sums are given by   
\begin{align*}
\textstyle \sum_{i \in \mathbb Z_7} \langle a(e_{i+1}) \times e_{i+3}, a(e_{i+3})  \times e_{i+1} \rangle  = & - \textstyle \sumz a_{i+1\,i+1}  a_{i+3\,i+3} - \sumz a_{i+5\,i+4}  a_{i+6\,i+2} \\
 & -\textstyle  \sumz a_{i+1\,i+3}  a_{i+4\,i+5} + \sumz a_{i+5\,i+4}  a_{i+2\,i+6} \\
& + \textstyle \sumz a_{i+3\,i+1}  a_{i+2\,i+6},        
\end{align*} 
\begin{align*}
\textstyle \sum_{i \in \mathbb Z_7} \langle a(e_{i+4}) \times e_{i+5}, a(e_{i+5})  \times e_{i+4} \rangle  = & -\textstyle \sumz a_{i+4\,i+4}  a_{i+5\,i+5} - \sumz a_{i+3\,i+1}  a_{i+6\,i+2} \\
 & +\textstyle  \sumz a_{i+1\,i+3}  a_{i+5\,i+4} - \sumz a_{i+4\,i+5}  a_{i+2\,i+6} \\
& +\textstyle  \sumz a_{i+1\,i+3}  a_{i+6\,i+2},        
\end{align*} 
\begin{align*}
\textstyle  \sum_{i \in \mathbb Z_7} \langle a(e_{i+2}) \times e_{i+6}, a(e_{i+6})  \times e_{i+2} \rangle  = & - \textstyle \sumz a_{i+2\,i+2}  a_{i+6\,i+6} - \sumz a_{i+1\,i+3}  a_{i+6\,i+2} \\
 & +\textstyle  \sumz a_{i+3\,i+1}  a_{i+4\,i+5} - \sumz a_{i+3\,i+1}  a_{i+4\,i+5} \\
& +\textstyle  \sumz a_{i+4\,i+5}  a_{i+6\,i+2}.        
\end{align*} 
Therefore,
\begin{eqnarray*}
i_2(a) & = &\textstyle \sumz \left(  (a_{i+1\,i+3}^2 + a_{i+3\,i+1}^2 +   a_{i+4\,i+5}^2 + a_{i+5\,i+4}^2
+  a_{i+2\,i+6}^2 +a_{i+6\,i+2}^2\right)   \\
& & - 2  \textstyle \sumz \left(a_{i+1\,i+1}  a_{i+3\,i+3}+a_{i+4\,i+4} a_{i+5\,i+5} + a_{i+2\,i+2} a_{i+6\,i+6} \right)\\
& & - 2\textstyle  \sumz ( a_{i+1\,i+3} - a_{i+3\,i+1})  ( a_{i+4\,i+5} - a_{i+5\,i+4})  \\
& & - 2 \textstyle  \sumz ( a_{i+1\,i+3} - a_{i+3\,i+1})  ( a_{i+2\,i+6} - a_{i+6\,i+2})  \\
& & - 2 \textstyle \sumz ( a_{i+4\,i+5} - a_{i+5\,i+4})  ( a_{i+2\,i+6} - a_{i+6\,i+2}).
\end{eqnarray*} 
From this, it follows
\begin{eqnarray*}
i_2(a) & = & \| a\|^2 - 7 \| a_{(1)} \|^2 + 2 \| a_{(2)} + a_{(4)}\|^2 - 6  \|a_{(4)}\|^2\\
           & = &  - 6 \| a_{(1)} \|^2 + \| a_{(3)} \|^2 +3 \| a_{(2)} \|^2 - 3 \|a_{(4)}\|^2.
\end{eqnarray*} 

For $i_0(a)$,  using \eqref{uu2}, one has  
\begin{eqnarray*}
i_0(a) & = &- \textstyle  \sum_{i,j \in \mathbb Z_7} \langle a(e_i),   a(e_j) \times (e_j \times e_i) \rangle\\
& =& \textstyle  \sum_{i,j \in \mathbb Z_7}   \langle a(e_i) \times e_j,   a(e_j) \times e_i\rangle
- \| a\|^2 -  \sum_{i,j \in \mathbb Z_7}  a_{i\, j} a_{j\, i} + 2   \sum_{i,j \in \mathbb Z_7}  a_{i\, i} a_{j\, j}.
\end{eqnarray*}
This identity can be writen in the following way 
 \begin{eqnarray*}
i_0(a) & = & i_2(a)  - \| a\|^2  + 2 \sigma_2(a) - \textstyle \sum_{i \in \mathbb Z_7}  a_{i\, i}^2 +   2  \sum_{i \in \mathbb Z_7}  a_{i\, i}^2 +  \sum_{i,j \in \mathbb Z_7 \, i\ne j }  a_{i\, i} a_{j\, j}\\
& = &  i_2(a)  - \| a\|^2  + 2 \sigma_2(a) + \sigma_{1}(a)^2
\end{eqnarray*}
(note that this is one identity given in \cite{KN2} and displayed below).  Finally, using the expressions  for $ i_2(a)$,  $\sigma_2(a)$ and $\sigma_{1}(a)^2$, it follows
 \begin{eqnarray*}
  i_0(a)  & = & -6  \|a_{(1)} \|^2 +  3 \|a_{(2)} \|^2 +  \|a_{(3)} \|^2 - 3 \|a_{(4)} \|^2  - \| a\|^2\\
          & &+  6  \| a_{(1)} \|^2  +  \| a_{(2)} \|^2 -  \| a_{(3)} \|^2+    \| a_{(4)} \|^2 +  7  \| a_{(1)} \|^2 \\
          &=&   6  \| a_{(1)} \|^2 +  3 \|a_{(2)} \|^2   -  \|a_{(3)} \|^2 - 3 \|a_{(4)} \|^2. 
\end{eqnarray*}

The fourth identity in Lemma follows by using \eqref{uu1} in 
$$
\textstyle \sum_{i,j \in \mathbb Z_7} \|e_i \times a(e_j) \|^2 =  - \sum_{i,j\in \mathbb Z_7} \langle a(e_j), e_i \times (e_i \times a(e_j)) \rangle.  
$$
\end{proof}

From Lemma \ref{lesa1} and  Lemma \ref{leia2}, it is direct to check the identities proved by Niedzia\l omski.  
\begin{lemma}[\cite{KN2}] \label{leia3}. 
For all $a \in \lie{gl}(7)$, the following relations hold:
\begin{eqnarray*}
i_1(a) & = & - i_0(a) + \| a\|^2 + 4 \sigma_2(a) - \sigma_1(a)^2,\\
i_2(a) & = & i_0(a) + \|a\|^2 - 2 \sigma_2(a) - \sigma_1(a)^2.
\end{eqnarray*}
In particular, 
$$
i_1(a) - i_2(a) = -2 i_0(a) + 6 \sigma_2(a).
$$
\end{lemma}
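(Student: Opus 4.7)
The statement is essentially a bookkeeping identity: it relates the three $\Lie G_2$-invariants $i_0(a), i_1(a), i_2(a)$ to the $\Lie{SO}(7)$-invariants $\|a\|^2, \sigma_1(a)^2, \sigma_2(a)$. Since the preceding Lemma \ref{lesa1} and Lemma \ref{leia2} already express both sides of each claimed identity as explicit linear combinations of the four squared norms $\|a_{(1)}\|^2, \|a_{(2)}\|^2, \|a_{(3)}\|^2, \|a_{(4)}\|^2$, the plan is simply to substitute and compare coefficients. No new geometric input is required.

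\emph{Step 1.} Recall from the orthogonal decomposition $\lie{gl}(7)= \lie X_1\oplus \lie X_2\oplus \lie X_3\oplus \lie X_4$ that $\|a\|^2 = \|a_{(1)}\|^2 + \|a_{(2)}\|^2 + \|a_{(3)}\|^2 + \|a_{(4)}\|^2$. Combined with the expressions $\sigma_1(a)^2 = 7\|a_{(1)}\|^2$ and $2\sigma_2(a) = 6\|a_{(1)}\|^2 + \|a_{(2)}\|^2 - \|a_{(3)}\|^2 + \|a_{(4)}\|^2$ from Lemma \ref{lesa1}, and the formulas for $i_0(a), i_1(a), i_2(a)$ from Lemma \ref{leia2}, every quantity appearing in the claim is written in the same basis.

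\emph{Step 2.} To verify the identity for $i_1(a)$, I would expand $-i_0(a)+\|a\|^2+4\sigma_2(a)-\sigma_1(a)^2$ and collect coefficients of $\|a_{(k)}\|^2$ for $k=1,2,3,4$; the coefficients of $\|a_{(1)}\|^2, \|a_{(2)}\|^2, \|a_{(3)}\|^2$ vanish and the coefficient of $\|a_{(4)}\|^2$ equals $6$, matching $i_1(a)=6\|a_{(4)}\|^2$. The identity for $i_2(a)$ is checked in the same way, and the final identity $i_1(a)-i_2(a)=-2i_0(a)+6\sigma_2(a)$ is then immediate by subtracting the two displayed equalities (the $\|a\|^2$ and $\sigma_1(a)^2$ terms cancel).

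There is no real obstacle here; the only thing that could go wrong is a sign or coefficient slip in the algebra, so the main care is bookkeeping. It is perhaps worth remarking that the third identity is in fact redundant, being a formal consequence of the first two, so only two substitutions really need to be carried out.
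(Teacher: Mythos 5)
Your proposal is correct and matches the paper's own argument, which likewise dismisses the lemma as ``direct to check'' from Lemma \ref{lesa1} and Lemma \ref{leia2} by expressing all quantities in the basis $\|a_{(1)}\|^2,\dots,\|a_{(4)}\|^2$ (using the orthogonality $\|a\|^2=\sum_k\|a_{(k)}\|^2$) and comparing coefficients. Nothing further is needed.
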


Finally, it is interesting to note that, reciprocally, the norms of the $\Lie G_2$-components of an endomorphism $a$ can be expressed in terms of its invariants. In fact, the identities in next lemma  easily follow  from Lemma \ref{lesa1},  Lemma \ref{leia2} and  Lemma \ref{leia3}. 
\begin{lemma} For all $a \in \lie{gl} (7)$, it is satisfied
\begin{eqnarray*} 
\| a_{(1)} \|^2 & = & \sigma_1(a)^2,  \qquad   a_{(1)} = \tfrac17  \sigma_1(a) \textstyle\sumz e_i^* \otimes e_i,\\
6 \| a_{(2)}\|^2 & = & i_0(a) + i_1(a) + i_2(a)\, = \;- 6 \sigma_2(a)  +  3 i_0(a) + 2 i_1(a),\\
6 \| a_{(3)}\|^2 & =  & 12 \, \sigma_1(a) ^2 - i_0(a)  + i_2(a)\; = \; 12 \sigma_1(a) ^2- 6\sigma_2(a) + i_0(a) + i_1(a),\\  
6 \| a_{(4)}\|^2 & = &   i_1(a).
\end{eqnarray*}
\end{lemma}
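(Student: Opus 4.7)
The plan is to observe that this final lemma is a purely algebraic rearrangement of what has already been established: Lemmas \ref{lesa1}, \ref{leia2} and \ref{leia3} together supply more than enough linear relations between the four unknowns $\|a_{(1)}\|^2,\dots,\|a_{(4)}\|^2$ and the invariants $\sigma_1(a)^2,\sigma_2(a),i_0(a),i_1(a),i_2(a)$, so each of the four stated identities should drop out by choosing the right combination. No new tensorial computation is needed.

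First I would dispatch the two extreme cases. The identity for $\|a_{(1)}\|^2$ (and the explicit formula for $a_{(1)}$) is just a restatement of \eqref{a1}, since $\sigma_1(a)=\mathrm{tr}(a)$ and $\sum_{i\in\mathbb Z_7}e_i^*\otimes e_i=\mathrm{Id}_{\mathbb R^7}$. The identity $6\|a_{(4)}\|^2=i_1(a)$ is exactly the second formula in Lemma \ref{leia2}, read from right to left.

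Next I would handle $\|a_{(2)}\|^2$ by simply adding the three expressions for $i_0(a), i_1(a), i_2(a)$ from Lemma \ref{leia2}: the contributions in $\|a_{(1)}\|^2$, $\|a_{(3)}\|^2$ and $\|a_{(4)}\|^2$ cancel in pairs, leaving $6\|a_{(2)}\|^2$, which gives the first form. For the alternative form $-6\sigma_2(a)+3i_0(a)+2i_1(a)$, I would substitute the expression for $2\sigma_2(a)$ from Lemma \ref{lesa1} (equivalently, use the combination $i_1(a)-i_2(a)=-2i_0(a)+6\sigma_2(a)$ from Lemma \ref{leia3}) and check that the coefficients of $\|a_{(1)}\|^2$, $\|a_{(3)}\|^2$, $\|a_{(4)}\|^2$ cancel while the coefficient of $\|a_{(2)}\|^2$ is again $6$.

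Finally, for the identity on $\|a_{(3)}\|^2$ I would form the combination $-i_0(a)+i_2(a)$ from Lemma \ref{leia2}: the $\|a_{(2)}\|^2$ and $\|a_{(4)}\|^2$ contributions cancel, leaving only a linear combination of $\|a_{(1)}\|^2$ and $\|a_{(3)}\|^2$; adding the appropriate multiple of $\sigma_1(a)^2=7\|a_{(1)}\|^2$ (Lemma \ref{lesa1}) eliminates $\|a_{(1)}\|^2$ and isolates $6\|a_{(3)}\|^2$. The second displayed form is then obtained by using Lemma \ref{leia3} to trade $i_2(a)$ for $-i_0(a)+\|a\|^2-2\sigma_2(a)-\sigma_1(a)^2$ (or, more simply, by using $i_1(a)-i_2(a)=-2i_0(a)+6\sigma_2(a)$ to replace $i_2(a)$ by $2i_0(a)+i_1(a)-6\sigma_2(a)$) and collecting terms. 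The only real obstacle is keeping the signs and numerical coefficients consistent across all four lines; there is no conceptual difficulty.
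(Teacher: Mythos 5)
Your method is exactly the paper's: the paper offers no computation here, saying only that the identities ``easily follow'' from Lemmas \ref{lesa1}, \ref{leia2} and \ref{leia3}, and taking linear combinations of those relations is the intended argument. Lines two and four of the statement do check out exactly as you describe: $i_0(a)+i_1(a)+i_2(a)=6\|a_{(2)}\|^2$ and $i_1(a)=6\|a_{(4)}\|^2$ follow at once from Lemma \ref{leia2}, and the alternative form of line two follows via Lemma \ref{leia3}.

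The gap lies precisely in the arithmetic you postponed. For line one, \eqref{a1} gives $\|a_{(1)}\|^2=\tfrac17(\mathrm{tr}(a))^2=\tfrac17\sigma_1(a)^2$, not $\sigma_1(a)^2$, so the first displayed identity is \emph{not} ``a restatement of \eqref{a1}''; it differs from it by a factor of $7$ (test $a=\mathrm{Id}$: $\|a_{(1)}\|^2=7$ while $\sigma_1(a)^2=49$). For line three, Lemma \ref{leia2} gives $-i_0(a)+i_2(a)=-12\|a_{(1)}\|^2+2\|a_{(3)}\|^2$, and the multiple of $\sigma_1(a)^2=7\|a_{(1)}\|^2$ that kills the $\|a_{(1)}\|^2$ term is $\tfrac{12}{7}$, which isolates $2\|a_{(3)}\|^2$, not $6\|a_{(3)}\|^2$; the printed combination $12\sigma_1(a)^2-i_0(a)+i_2(a)$ actually equals $72\|a_{(1)}\|^2+2\|a_{(3)}\|^2$ (again visible on $a=\mathrm{Id}$, where it equals $504$ rather than $0$). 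So your assertion that adding ``the appropriate multiple of $\sigma_1(a)^2$'' isolates $6\|a_{(3)}\|^2$ is false as stated: what your (correct) method produces is $7\|a_{(1)}\|^2=\sigma_1(a)^2$ and $2\|a_{(3)}\|^2=\tfrac{12}{7}\sigma_1(a)^2-i_0(a)+i_2(a)$. The lemma as printed contains misprints in those two lines, and a proof claiming to derive them verbatim from the earlier lemmas cannot be completed; you would have caught this had you carried out the coefficient check you deferred.
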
  
\vspace{2mm}

\section{$\Lie{G}_2$-structures} {\indent}
\setcounter{equation}{0} 
   We recall briefly some facts about ${\sl G}_2$-structures and
display some results we will need later.

A ${\sl G}_2$-structure on a Riemannian seven-manifold
$(M,\langle\cdot ,\cdot \rangle)$ is by 
definition a 
$\Lie{G}_2$-reduction $\mathcal P(M)$  of the orthonormal frame $\Lie{SO}(7)$-bundle $\mathcal{SO}(M)$. This is equivalent to the existence of a global three-form
$\varphi$ which may be locally written as in (\ref{fundphi}). For all
$m \in M$, the
tangent space $T\,M$ is then associated to the representation $ \mathbb R^7$ of $\Lie{G}_2$.
Hence, in each point of a Riemannian manifold with a  ${\sl
G}_2$-structure,  there are  local orthonormal coframe fields
$\{ \- e_0^*, \- \ldots, \- e_6^* \}$ such that
$\varphi$ is written as in (\ref{fundphi}), we call them {\it Cayley
coframes}.  The dual frame, denoted also by $\{
e_0, \dots , e_6 \}$,  of a Cayley coframe is called  a {\it Cayley frame}.  

When $M$ is equipped with a $\Lie{G}_2$-structure, one  can also consider the two-fold vector cross product   $\times : T M \times T M \to T
M$ given by $\langle X \times Y , Z \rangle = \varphi (X,Y,Z)$ \cite{FG}. 

If $M$ is equipped with a
$\Lie{G}_2$-structure, then there always  exists a $\Lie{G}_2$-connection
$\widetilde{\nabla}$ defined on $M$. Doing the difference
$\widetilde{\xi}_X = \widetilde{\nabla}_X - \nabla_X$, where
$\nabla_X$ is the Levi-Civita connection of $\langle \cdot , \cdot
\rangle$, a tensor $\widetilde{\xi}_X \in \lie{so}(M)$ is
obtained.  Decomposing $\widetilde{\xi}_X = ( \widetilde{\xi}_X
)_{\lie{g}_{2}} + ( \widetilde{\xi}_X )_{\lie{g}_{2}^\perp}$,
$( \widetilde{\xi}_X )_{\lie{g}_{2}} \in \lie{g}_{2}$
and $( \widetilde{\xi}_X )_{\lie{g}_{2}^\perp} \in
\lie{g}_{2}^\perp$, a new $\Lie{G}_2$-connection $\nabla^{\Lie G_2}$, defined by
$\nabla^{\Lie G_2}_X = \widetilde{\nabla}_X - (\tilde{\xi}_X
)_{\lie{g}_{2}}$, can be considered. Because the difference
between two $\Lie G_2$-connections must be in $\lie{g}_{2}$,
$\nabla^{\Lie G_2}$ is the unique $\Lie G_2$-connection on $M$ such that its
torsion $\xi_X = ( \widetilde{\xi}_X
)_{\lie{g}_{2}^\perp } = \nabla^{\Lie G_2}_X - \nabla_X$ is in
$\lie{g}_{2}^\perp$. $\nabla^{\Lie G_2}$ is called the {\it minimal
connection} and $\xi$ is referred to as the {\it intrinsic
torsion} of the $\Lie G_2$-structure on $M$
\cite{CleytonSwann:torsion}. 
\vspace{1mm}

By one hand, as it is pointed out in \cite{KN2}, since $\xi_X \in \lie{g}_{2}^\perp$, one has 
$$
\xi_X Y = A_{T(X)} (Y) = Y \times T(X),
$$
where $T\in \mathrm T M \otimes \mathrm T^* M$  (see the description of $\lie{g}_{2}^\perp$ given in Section \ref{preli}).

On the other hand, the covariant derivative $\nabla \varphi$ satisfies $\nabla \varphi = - \xi \varphi$ and $\xi \to - \xi \varphi$ is a $\Lie{G}_2$-map. Thus the space of   possible intrinsic torsions can be identified with the space $\mathfrak X \subseteq \mathrm T^* M \otimes \Lambda^3  \mathrm T^* M  $ of covariant derivatives  of $\varphi$. Consider now the $\Lie G_2$-map $r : \mathfrak X \longrightarrow   \mathrm T^* M \times    \mathrm T^* M$ defined by \cite{FMC0, FMC00} 
$$
r(\alpha) (X,Y) = \frac14  (X\lrcorner \alpha , Y\lrcorner \star \varphi ) = \frac1{24}  \langle X\lrcorner \alpha , Y\lrcorner \star \varphi \rangle.
$$   
Then an explicit expression \cite{FMC} for $\xi$  is given by 
$$
\xi_X Y = - \frac13 \textstyle  \sumz r(\nabla \varphi) (X,e_i) e_i \times Y.
$$
Thus, comparing with the identity $\xi_X Y = Y \times T(X)$, it is found \cite{KN2}
$$
T(X) = \frac13 \textstyle  \sumz r(\nabla \varphi) (X,e_i) e_i.
$$
Note that $T_{i \, j} = \langle e_i, T(e_j) \rangle = \frac13 r(\nabla \varphi) (e_j,e_i)$.
\vspace{2mm}

The space $\mathfrak X = \mathrm T^* M \otimes \lie g_2^\perp  \subseteq \mathrm T^* M \otimes (\mathrm T M \otimes \mathrm T^* M)$ of posible  intrinsic torsions, under the action of $\Lie G_2$,  is decomposed into four irreducible $\Lie G_2$-modules $\lie X = \lie X_1 \oplus \lie X_2 \oplus     \lie X_3 \oplus  \lie X_4$. This gives place to a natural classification of types of $\Lie G_2$-structure. This was shown by Fernández and Gray, by using $\nabla \varphi$, in \cite{FG}.
 Using  the composition of maps $\xi \to - \xi \varphi = \nabla \varphi \to r(-\xi \varphi) \to T$, it is derived $\lie X \cong  \lie{gl}(\mathrm T M) $,
$\lie X_1 \cong \mathbb R \,\mathrm{Id}_{\mathrm T M}$, $\lie X_3 \cong S_0^2 \mathrm T^* M$, $\lie X_2 \cong \lie g_2$, and $\lie X_4 \cong \lie g_2^\perp$.
\vspace{2mm}

\section{The different  integral formulas }
By using different tools, some authors have derived integral formulas for $\Lie G_2$-structures. Our purpose here is to display  the correlations between them. The existence of theses correlations was indicated by Kiedzialomski in \cite{KN2}.  Firstly, it is shown   the integral formula more recently proved.
\begin{theorem}[\cite{KN2}]
On a closed (i.e., compact without boundary) manifold $M$ equipped with a $\Lie G_2$-structure, the following integral formula holds
$$
\frac16 \int_M s \, \mathrm{vol}_M =  \int_M - \frac32  i_0(T)  + 6 \sigma_2(T) \; \mathrm{vol}_M, 
$$
where $s$ denotes the scalar curvature. 
\end{theorem}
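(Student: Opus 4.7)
The plan is to derive the pointwise identity
$$s = -9\,i_0(T) + 36\,\sigma_2(T) + \mathrm{div}\,W,$$
for an explicit vector field $W$ on $M$ built from $\xi$, and then to integrate over the closed manifold, so that the divergence theorem kills the last term and division by $6$ yields the claim. The argument therefore splits into two pieces: producing such a pointwise identity by comparing the Levi-Civita connection $\nabla$ with the minimal $\Lie G_2$-connection $\nabla^{\Lie G_2}=\nabla+\xi$, and then matching the algebraic residue to $-9\,i_0(T)+36\,\sigma_2(T)$ via the $\Lie G_2$-invariant lemmas of Section 3.

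For the first piece I would use the standard curvature comparison for metric connections,
$$R(X,Y)Z = R^{\Lie G_2}(X,Y)Z - (\nabla_X\xi)_Y Z + (\nabla_Y\xi)_X Z - [\xi_X,\xi_Y]Z,$$
which holds because $\nabla$ is torsion-free. Taking the scalar trace $s=\sum_{i,j}\langle R(e_i,e_j)e_j,e_i\rangle$ produces on the right-hand side the scalar trace of $R^{\Lie G_2}$, first-derivative terms that reorganise into $\mathrm{div}\sum_i\xi_{e_i}e_i$, and two algebraic contractions of the form $\sum_{i,j}\langle\xi_{e_i}\xi_{e_j}e_j,e_i\rangle$. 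The scalar trace of $R^{\Lie G_2}$ is itself reduced, via the first Bianchi identity for $\nabla^{\Lie G_2}$ combined with the $\lie g_2$-valuedness of $R^{\Lie G_2}$, to a further algebraic term quadratic in $T$ plus a divergence contribution; this is the Friedrich--Ivanov type mechanism standard for $G$-structures with minimal connection.

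For the second piece, substituting $\xi_X Y = Y\times T(X)$ from Section 4 and expanding the iterated cross products via \eqref{uu1}--\eqref{uu2} rewrites each algebraic double-contraction as a linear combination of the invariants $i_0(T)$, $i_1(T)$, $i_2(T)$ together with the auxiliary scalars $\|T\|^2$ and $\sigma_1(T)^2$. Applying Lemma \ref{leia3} to eliminate $i_1$ and $i_2$ in favour of $i_0$, $\sigma_1^2$ and $\sigma_2$, and then Lemma \ref{lesa1} to re-express the surviving $\sigma_1^2$ contribution, the full algebraic residue collapses to $-9\,i_0(T)+36\,\sigma_2(T)$. Integrating over the closed $M$ and invoking the divergence theorem then gives the stated formula after division by $6$.

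The main obstacle I expect is in the first piece: verifying that the scalar trace of $R^{\Lie G_2}$ and the first-derivative terms together leave only a pure divergence plus algebraic terms in exactly the same combination $-9\,i_0(T)+36\,\sigma_2(T)$, without any residual $\|T\|^2$ or $\sigma_1(T)^2$ surviving integration. This requires careful bookkeeping of the signs in the cross-product identities \eqref{uu1}--\eqref{uu2}, controlled use of the $\lie g_2$-constraints on $R^{\Lie G_2}$ and of the first Bianchi identity of the minimal connection, and a check that the numerical coefficients conspire precisely as required.
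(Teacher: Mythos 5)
Your proposal is correct in outline and follows essentially the same route as the paper: the pointwise identity you target, $s=-9\,i_0(T)+36\,\sigma_2(T)+\mathrm{div}\,W$ with $W=-6\sum_{i}\xi_{e_i}e_i$, is precisely the divergence equation \eqref{divkamil} multiplied by $-6$, and the theorem then follows by integrating over the closed manifold exactly as in the text. Be aware, though, that the paper does not itself carry out the curvature comparison --- it quotes \eqref{divkamil} from Lemmas 2.2, 4.1 and 4.2 of \cite{KN2} --- and your sketch likewise defers the decisive verification that the quadratic residue equals exactly $-9\,i_0(T)+36\,\sigma_2(T)$ with no surviving $\|T\|^2$ or $\sigma_1(T)^2$ terms, which is where essentially all of the work lies.
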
    
This is shown by deducing   the divergence formula given by 
\begin{equation} \label{divkamil}
\textstyle  \mathrm{div} \sumz \xi_{e_i} e_i  =\displaystyle -  \frac16 s  -  \frac32 i_0(T) + 6 \sigma_2(T)  
\end{equation} 
(see Lemma 2.2, Lemma 4.1 and  Lemma 4.2 in \cite{KN2}). For the vector field $\sumz \xi_{e_i} e_i$, one has 
\begin{equation} \label{peT}
\textstyle  \sumz \xi_{e_i} e_i =\displaystyle  - p(T) =- \frac16 (pd^\star\varphi)^\sharp,
\end{equation}
where $pd^\star\varphi = \star (\star d\varphi \wedge \varphi) = - \star (\star d \star \varphi \w \star \varphi)$ \cite{FMC0, FMC00} and we  have used the musical isomorphism $\langle \alpha^\sharp, x\rangle = \alpha(x)$, for $\alpha \in \mathrm{T}^*_m M$, $x\in \mathrm T_m M$ and $m\in M$.  In fact, considering a Cayley frame $\{e_0, \dots, e_6\}$ (this frame will be also used as Cayley frame in the sequel),  
\begin{align*}
\textstyle \sumz \xi_{e_i} e_i  = & \textstyle  \sumz e_i \times T(e_i) \\
 = &\textstyle  \sumz \left( T_{i +1\, i} \,  e_i \times e_{i+1} +  T_{i +2\, i} \,  e_i \times e_{i+2} +T_{i +3\, i} \,  e_i \times e_{i+3}   + T_{i +4\, i} \,  e_i \times e_{i+4}\right. \\
&\qquad   \left. + T_{i +5\, i} \,  e_i \times e_{i+5} +  T_{i +6\, i} \,  e_i \times e_{i+6} \right) \\
 = & \textstyle \sumz \left( T_{i +1\, i} e_{i+3} +  T_{i +2\, i} e_{i+6} - T_{i +3\, i} e_{i+1}+   T_{i +4\, i} e_{i+5} -  T_{i +5\, i} e_{i+4} -   T_{i +6\, i} e_{i+2}\right) \\
   = & -\textstyle  \sumz  \left(   T_{i +1\, i+3 } - T_{i +3\, i+1} +  T_{i +4\, i+5} - T_{i +5\, i+4} + T_{i +2\, i+6} - T_{i +6\, i+2}\right) e_{i} \\ç
   = & - p(T).
\end{align*}
Finally, note that  $p(T) = - \frac13 p(r(\nabla \varphi))= \frac16 (pd^\star\varphi)^\sharp$ 
(see \cite{FMC}). 
\vspace{3mm}

Now, taking Lemma \ref{lesa1} and \ref{leia2} into account, it is obtained 
the following divergence equation and 
integral formula in  terms of the $\Lie G_2$-components of $T$
\begin{equation}\label{divformTT}
\frac16 s \, =    9  \|T_{(1)} \|^2 - \frac32 \|T_{(2)} \|^2 - \frac32 \|T_{(3)} \|^2+ \frac{15}2 \|T_{(4)} \|^2 + \mathrm{div} \, (p(T)). 
\end{equation}
As consequence, one has the following result.
\begin{theorem}
On a closed  manifold $M$ equipped with a $\Lie G_2$-structure, the following integral formula holds
\begin{equation}\label{formTT}
\frac16 \int_M s \, \mathrm{vol}_M =  \int_M  9  \|T_{(1)} \|^2 - \frac32 \|T_{(2)} \|^2 - \frac32 \|T_{(3)} \|^2+ \frac{15}2 \|T_{(4)} \|^2 \; \mathrm{vol}_M. 
\end{equation}
\end{theorem}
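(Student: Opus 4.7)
The plan is to integrate the pointwise identity \eqref{divformTT}, which has already been extracted in the excerpt as a direct consequence of the divergence equation \eqref{divkamil}. So the real work is to justify \eqref{divformTT} and then invoke Stokes/divergence theorem on the closed manifold $M$.

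First I would rewrite the right-hand side of \eqref{divkamil} in terms of the $\Lie G_2$-irreducible norms of $T$. By Lemma \ref{lesa1} applied to $a = T$, one has
\begin{equation*}
6\sigma_2(T) = 18\|T_{(1)}\|^2 + 3\|T_{(2)}\|^2 - 3\|T_{(3)}\|^2 + 3\|T_{(4)}\|^2,
\end{equation*}
while Lemma \ref{leia2} gives
\begin{equation*}
-\tfrac32 i_0(T) = -9\|T_{(1)}\|^2 - \tfrac92\|T_{(2)}\|^2 + \tfrac32\|T_{(3)}\|^2 + \tfrac92\|T_{(4)}\|^2.
\end{equation*}
Adding these two identities collapses the coefficients into $9$, $-\tfrac32$, $-\tfrac32$, $\tfrac{15}{2}$, matching precisely the integrand claimed in the theorem. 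Combined with \eqref{divkamil} and \eqref{peT}, this yields the pointwise divergence equation \eqref{divformTT}.

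Second, I would integrate \eqref{divformTT} against $\mathrm{vol}_M$. Since $M$ is closed (compact without boundary), the divergence theorem gives
\begin{equation*}
\int_M \mathrm{div}\,(p(T))\,\mathrm{vol}_M = 0,
\end{equation*}
and the remaining terms are exactly the stated integral formula \eqref{formTT}. No further analytic machinery is needed.

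There is no substantial obstacle: once the two algebraic lemmas \ref{lesa1} and \ref{leia2} are in hand, the proof is a short linear combination followed by an application of the divergence theorem. The only point requiring care is a bookkeeping check that the numerical coefficients $9,-\tfrac32,-\tfrac32,\tfrac{15}{2}$ on the right-hand side indeed arise from $6\sigma_2(T) - \tfrac32 i_0(T)$, which is a one-line verification as above.
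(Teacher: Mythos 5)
Your proposal is correct and follows exactly the paper's route: the paper likewise obtains \eqref{divformTT} by substituting Lemma \ref{lesa1} and Lemma \ref{leia2} into \eqref{divkamil} via \eqref{peT}, and then integrates over the closed manifold so that the divergence term vanishes. Your coefficient bookkeeping ($18-9=9$, $3-\tfrac92=-\tfrac32$, $-3+\tfrac32=-\tfrac32$, $3+\tfrac92=\tfrac{15}{2}$) checks out.
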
 
By the fourth identity given in Lemma \ref{leia2}, it is obtained  $\|\xi\|^2 = 6 \| T\|^2$. Taking this and $p(T) = -  \sumz \xi_{e_i} e_i$ into account in follows next divergence equation 
$$
   s = 9 \|\xi_{(1)} \|^2  - \frac32 \|\xi_{(2)} \|^2  - \frac32 \|\xi_{(3)} \|^2+  \frac{15}2 \|\xi_{(4)} \|^2 - 6\,  \mathrm{div} \, \textstyle \sumz \xi_{e_i} e_i.
$$
Hence  one has next consequence.  
 \begin{corollary}
On a closed  manifold $M$ equipped with a $\Lie G_2$-structure with intrinsic torsion $\xi$, the following integral formula holds
\begin{equation} \label{herefor}
 \int_M s \, \mathrm{vol}_M =  \int_M  9 \|\xi_{(1)} \|^2  - \frac32 \|\xi_{(2)} \|^2  - \frac32 \|\xi_{(3)} \|^2+  \frac{15}2 \|\xi_{(4)} \|^2 \; \mathrm{vol}_M.
\end{equation} 
\end{corollary}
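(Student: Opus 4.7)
The plan is straightforward: I would integrate over the closed manifold $M$ the pointwise divergence equation displayed immediately above the statement, namely
$$
s = 9\|\xi_{(1)}\|^2 - \frac{3}{2}\|\xi_{(2)}\|^2 - \frac{3}{2}\|\xi_{(3)}\|^2 + \frac{15}{2}\|\xi_{(4)}\|^2 - 6\, \mathrm{div}\sumz \xi_{e_i} e_i,
$$
and then invoke the divergence theorem. Because $M$ is compact without boundary, $\int_M \mathrm{div}\sumz \xi_{e_i} e_i \; \mathrm{vol}_M = 0$, so the last term drops out and exactly \eqref{herefor} is what survives.

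An alternative route would start from \eqref{formTT}, multiply through by $6$, and pass to the $\xi$-components via the pointwise identities $\|\xi_{(i)}\|^2 = 6\|T_{(i)}\|^2$ for $i=1,\dots,4$. These componentwise scalings follow from $\Lie G_2$-equivariance of the map $T \mapsto \xi$ with $\xi_X Y = Y\times T(X)$: this map respects the irreducible decomposition into $\lie X_1,\dots,\lie X_4$, so by Schur's lemma it acts as a scalar on each summand, and specialising the global identity $\|\xi\|^2 = 6\|T\|^2$ (the fourth line of Lemma \ref{leia2}) to $T \in \lie X_i$ forces that scalar to be $\sqrt{6}$ on every component.

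I do not expect any real obstacle: the hard work was completed earlier in establishing both the pointwise divergence identity and the global scaling $\|\xi\|^2 = 6\|T\|^2$. The corollary will follow as a one-line application of Stokes' theorem on a closed manifold, and the only minor check is that the $T$-versus-$\xi$ scaling passes to each irreducible component, which the Schur argument above takes care of.
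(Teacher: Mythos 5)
Your proposal is correct and matches the paper's own route: the corollary is obtained by integrating the divergence equation displayed just above it over the closed manifold, the divergence term vanishing by Stokes' theorem, and that equation is itself obtained from \eqref{divformTT} via the componentwise scaling $\|\xi_{(i)}\|^2 = 6\|T_{(i)}\|^2$ coming from the fourth identity of Lemma \ref{leia2} together with $p(T) = -\sumz \xi_{e_i}e_i$. Your Schur-lemma justification of the componentwise scaling is a correct way of making explicit what the paper leaves implicit.
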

Now we  will compare this identity with the integral formula given by Bor and Hernández Lamoneda.
\begin{theorem}[\cite{BHLS1}]
On a closed  manifold $M$ equipped with a $\Lie G_2$-structure, the following integral formula holds
\begin{equation} \label{BorHL}
 \int_M s \, \mathrm{vol}_M =  \int_M 9 \| (\nabla \varphi)_{(1)}\|^2 - \frac32  \| (\nabla \varphi)_{(2)}\|^2 -\frac32  \| (\nabla \varphi)_{(3)}\|^2   + \frac{15}2 \| (\nabla \varphi)_{(4)}\|^2 \; \mathrm{vol}_M.
\end{equation}
\end{theorem}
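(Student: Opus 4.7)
The plan is to derive \eqref{BorHL} from the already-established formula \eqref{herefor} via the $\Lie G_2$-equivariant correspondence $\xi \mapsto -\xi\varphi = \nabla\varphi$. This map is fiberwise $\mathrm{id}_{T^*M} \otimes \iota$, where $\iota \colon \lie g_2^\perp \to \Lambda^3 T^*M$ is $\iota(a) = -a\varphi$ and takes values in the unique $\Lie G_2$-isotypic summand $\Lambda^3_7 = \{v \lrcorner \star\varphi : v \in \mathbb R^7\}$ of $\Lambda^3 T^*M$. Since $\lie g_2^\perp \cong \mathbb R^7$ is irreducible, Schur's lemma forces $\iota(A_v) = \lambda\,(v \lrcorner \star\varphi)$ for a single universal constant $\lambda$, which a direct Cayley-basis computation (for example, evaluating both sides at $v=e_0$ on the triples $(e_3,e_5,e_6)$ and $(e_2,e_3,e_4)$) determines.

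Because $\iota$ is the same scalar multiple of an isometric embedding on the unique copy of $\mathbb R^7$ in $\Lambda^3 T^*M$, the induced map $\xi \mapsto \nabla\varphi$ scales norms by one and the same universal constant on each of the four irreducible summands $\lie X_i \subset \mathfrak X$. Hence $\|(\nabla\varphi)_{(i)}\|^2$ and $\|\xi_{(i)}\|^2$ are proportional via a single factor $k$ independent of $i$; under the inner-product conventions adopted throughout the paper one verifies $k=1$. Substituting $\|(\nabla\varphi)_{(i)}\|^2 = \|\xi_{(i)}\|^2$ into \eqref{BorHL} reproduces \eqref{herefor} term by term, which proves the theorem.

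The main obstacle is the bookkeeping needed to pin down the universal constant $k$ exactly: one must compute $\|A_v\|^2$, the scalar $\lambda$, and $\|v \lrcorner \star\varphi\|^2$ explicitly in a Cayley frame and check that, with the $\langle\cdot,\cdot\rangle$ inner products in use, the combined factor collapses to $1$ rather than to some other universal constant as Schur's lemma alone would give. Once that normalisation is settled, the identification of the two four-term linear combinations is automatic.
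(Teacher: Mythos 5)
Your reduction of \eqref{BorHL} to \eqref{herefor} via the $\Lie G_2$-map $\xi\mapsto-\xi\varphi=\nabla\varphi$ and Schur's lemma has the right skeleton, and the observation that the scaling constant is the same on all four irreducible summands is correct and is implicitly how the paper argues as well. The gap is in the value of that constant: with the conventions of this paper one does \emph{not} get $k=1$. Writing $\nabla\varphi = 3\sum_{i,j}T_{j\,i}\,e_i^*\otimes(e_j\lrcorner\star\varphi)$ and using $\|e_j\lrcorner\star\varphi\|^2 = 3!\cdot 4 = 24$ together with $\|\xi\|^2=6\|T\|^2$, one finds $\|\nabla\varphi\|^2 = 9\cdot 24\,\|T\|^2 = 36\,\|\xi\|^2$, so $k=36$. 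Your substitution $\|(\nabla\varphi)_{(i)}\|^2=\|\xi_{(i)}\|^2$ is therefore false, and carried through it would show that \eqref{BorHL}, read literally with the $\varphi$ of this paper, \emph{contradicts} \eqref{herefor} rather than reproducing it.

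The missing idea is the change of conventions, which is precisely the point the paper makes after stating the theorem. Bor and Hernández Lamoneda follow the Kobayashi--Nomizu conventions for exterior algebra, under which their fundamental three-form is $\widetilde\varphi=\varphi/3!$; the covariant derivative in their formula is $\nabla\widetilde\varphi$, and only then does $\|\nabla\widetilde\varphi\|^2=\tfrac1{36}\|\nabla\varphi\|^2=\|\xi\|^2$ hold, whence $\|(\nabla\widetilde\varphi)_{(i)}\|^2=\|\xi_{(i)}\|^2$ componentwise and the agreement with \eqref{herefor}. Without identifying this factor of $3!$ (squared after taking norms) your normalisation check cannot ``collapse to $1$,'' and the proof does not close.
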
 

Note the coincidence relative to the coeffitient numbers.  However, $\| (\nabla \varphi)_{(i)}\|^2$ is not equal to $\| \xi_{(i)}\|^2$. What is  happening?. The answer  is based in the conventions followed in \cite{BHLS1} for the exterior product, the exterior derivative, etc. Such  conventions are those ones  fixed in \cite{KobNom}. Here we follow those ones fixed, for instance, in \cite{Boothby,ON}. Because of this, one has 
$
\varphi = 3! \widetilde \varphi, 
$ 
where $\widetilde \varphi$ is the three-form considered in \cite{BHLS1}. Below we will prove that $\| \nabla \varphi \|^2 = 36 \| \xi \|^2$. Therefore, 
$ \| \nabla \widetilde \varphi \|^2 =  \frac1{36} \| \nabla \varphi \|^2 = \| \xi \|^2$ and formulas \eqref{herefor} and \eqref{BorHL} are agree. Thus, Bor and Hernández Lamoneda are really right when they  say intrinsic torsion in the introduction of  \cite{BHLS1}.
\vspace{2mm}

Taking $3\, T_{j \,i} = r(\nabla \varphi) (e_i , e_j)$ and Lemma  2.1 of \cite{FMC0} into account,    $\nabla \varphi$ can be expressed as 
$$
\nabla \varphi = 3 \textstyle  \sum_{i,j \in \mathbb Z_7 } T_{j\, i}  e_i^* \otimes e_j \lrcorner \star \varphi.
$$
 This implies $\| \nabla \varphi \|^2= 9 \, . \, 4 \, . \, 6\,\|T \|^2 = 36\, \| \xi \|^2$ as it was  claimed.
\vspace{2mm}

 In \cite{RB}, Bryant has also given an integral formula for $\Lie G_2$-structures. This is expressed  in terms of  $\tau_0$, $\tau_1$, $\tau_2$ and  $\tau_3$ which are defined by 
\begin{align*}
d\varphi &=\tau_0\, \underline \star \, \varphi+3\tau_1\wedge\varphi+\underline  \star \,\tau_3, \\
d\,\underline \star \, \varphi &=4\tau_1\wedge  \underline \star \,\varphi+\tau_2\wedge\varphi.
\end{align*} 
where $\underline \star$ is the Hodge star operator induced by the volume form $\underline{\mathrm{vol}}_M$.   The function  $\tau_0$  determines $\xi_{(1)}$, the one-form  $\tau_1$ determines $\xi_{(4)}$, the two-form $\tau_2\in \Lambda^2_{(14)}T^{\ast}M\cong \lie g_2$ determines $\xi_{(2)}$ and the three-form  $\tau_3\in\Lambda^3_{(27)}T^{\ast}M\cong S_0^2 (T^{\ast}M) $ determines $\xi_{(3)}$.   The volume form  $\underline{\mathrm{vol}}_M$, fixed by Bryant, is such that $\underline{\mathrm{vol}}_M =  - \mathrm{vol}_M$. Hence $\underline \star = - \star$. But this is not a problem, because the corresponding norms are the same. Hence   we will write
\begin{align*}
d\varphi &=\tau_0\, \star \, \varphi+3\tau_1\wedge\varphi+  \star \,\tau_3, \\
d\, \star \, \varphi &=4\tau_1\wedge   \star \,\varphi+\tau_2\wedge\varphi,
\end{align*}
and the integral formula given by Bryant is displayed  in next result.  
\begin{theorem}[\cite{BHLS1}]
On a closed  manifold $M$ equipped with a $\Lie G_2$-structure, the following integral formula holds
\begin{equation} \label{eq:Bryant1}
\int_M s\,{\rm vol}_M=\int_M \frac{21}{8}\tau_0^2+30|\tau_1|^2-\frac{1}{2}|\tau_2|^2-\frac{1}{2}|\tau_3|^2\,{\rm vol}_M.
\end{equation}\label{divBryant}
\end{theorem}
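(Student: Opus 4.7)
The plan is to reduce \eqref{eq:Bryant1} to the already-proved formula \eqref{herefor} by matching Bryant's four torsion invariants $\tau_0, \tau_1, \tau_2, \tau_3$ to the corresponding $\Lie G_2$-components $\xi_{(1)}, \xi_{(4)}, \xi_{(2)}, \xi_{(3)}$ of the intrinsic torsion. Each $\tau_i$ lives in a $\Lie G_2$-irreducible space of the same type as one of $\lie X_1, \lie X_4, \lie X_2, \lie X_3$, so by Schur's lemma there is a scalar of proportionality between $|\tau_i|^2$ and the corresponding $\|\xi_{(j)}\|^2$. Pinning down these four scalars, and checking that substituting them into \eqref{herefor} produces the coefficients $\tfrac{21}{8}$, $30$, $-\tfrac12$, $-\tfrac12$, is the entire content of the proof.

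First I would compute $d\varphi$ and $d\star\varphi$ starting from $\nabla\varphi=-\xi\varphi$, using $d\varphi=\sum_i e_i^*\wedge \nabla_{e_i}\varphi$ and the analogous formula for $d\star\varphi$. Writing $\xi_X Y = Y\times T(X)$ and decomposing $T = T_{(1)}+T_{(2)}+T_{(3)}+T_{(4)}$, each $\nabla_{e_i}\varphi$ becomes an explicit expression in the four components of $T$, and the exterior derivatives land in the isotypic decomposition of $\Lambda^4 T^*M$ and $\Lambda^5 T^*M$ respectively. Comparing with Bryant's defining relations
\[
d\varphi = \tau_0 \star\varphi + 3\tau_1\wedge\varphi + \star\tau_3, \qquad d\star\varphi = 4\tau_1\wedge\star\varphi + \tau_2\wedge\varphi,
\]
then identifies $\tau_0$ as a multiple of $\mathrm{tr}(T)$, $\tau_1$ as a multiple of $p(T)^\flat$ (consistent with \eqref{peT}), $\tau_2$ with the $\lie g_2$-part of the skew piece of $T$, and $\tau_3$ with the traceless symmetric part of $T$.

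Second, I would compute $|\tau_i|^2$ in terms of $\|\xi_{(j)}\|^2$. Writing $|\tau_0|^2 = \lambda_1\|\xi_{(1)}\|^2$, $|\tau_1|^2 = \lambda_4\|\xi_{(4)}\|^2$, $|\tau_2|^2 = \lambda_2\|\xi_{(2)}\|^2$ and $|\tau_3|^2 = \lambda_3\|\xi_{(3)}\|^2$, the identity \eqref{eq:Bryant1} compared to \eqref{herefor} forces
\[
\tfrac{21}{8}\lambda_1 = 9, \quad 30\lambda_4 = \tfrac{15}{2}, \quad \tfrac12\lambda_2 = \tfrac32, \quad \tfrac12\lambda_3 = \tfrac32,
\]
i.e.\ $\lambda_1 = 24/7$, $\lambda_4 = 1/4$, $\lambda_2 = \lambda_3 = 3$. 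Verifying these four numerical ratios is the only real content, and amounts to contractions of $\varphi$ and $\star\varphi$ that have already been tabulated in the preceding sections. The main obstacle is the careful bookkeeping of normalisation conventions: Bryant's $|\cdot|$ is induced by the $(\cdot,\cdot)$ inner product on forms with its $1/p!$ factor, whereas $\|\xi_{(i)}\|$ is the tensor norm coming from $\langle\cdot,\cdot\rangle$, and Bryant's volume form differs by a sign from $\mathrm{vol}_M$ (which is harmless here since the $\tau_i$ enter only through squares). Once the four ratios are checked, substituting into \eqref{herefor} yields \eqref{eq:Bryant1}.
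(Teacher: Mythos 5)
Your proposal is correct and follows essentially the same route as the paper: the paper likewise identifies $\tau_0,\tau_1,\tau_2,\tau_3$ with the components $T_{(1)},T_{(4)},T_{(2)},T_{(3)}$ of $T$ via the explicit expressions \eqref{extder} and \eqref{extderstar} for $d\varphi$ and $d\star\varphi$, computes the proportionality constants $\tau_0^2=\tfrac{144}{7}\|T_{(1)}\|^2$, $|\tau_1|^2=\tfrac32\|T_{(4)}\|^2$, $|\tau_2|^2=18\|T_{(2)}\|^2$, $|\tau_3|^2=18\|T_{(3)}\|^2$ (which match your $\lambda_i$ after the universal rescaling $\|\xi_{(i)}\|^2=6\|T_{(i)}\|^2$), and substitutes into the already-established formula \eqref{formTT}. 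Your Schur-lemma remark is a pleasant conceptual justification for why single scalars suffice, but the verification you outline is exactly the computation the paper carries out.
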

It is denoted $| \cdot |$, because it is used $(\cdot , \cdot)$. Likewise, the corresponding divergence formula is given by  
\begin{equation}\label{eq:Bryant2}
s=12 \, d^\star \tau_1+\frac{21}{8}\tau_0^2+30|\tau_1|^2-\frac{1}{2}|\tau_2|^2-\frac{1}{2}|\tau_3|^2.
\end{equation}
Note that, despite of different volume forms, the one-form $\tau_1$ will be the same and,  for  the coderivatives, one has $d^\star = d^{\underline \star}$.  We recall that  $d^\star  \tau_1 = - \mathrm{div}\, \tau_1^\sharp$. 
\begin{lemma}
The following identities hold
$$
\tau_0^2 = \frac{12^2}{7^2} \sigma_1(T)^2 = \frac{12^2}{7} \| T_{(1)}\|^2, \qquad |\tau_1|^2 = \frac14 \|p(T)\|^2 = \frac32 \|T_{(4)}\|^2,
$$
$$
|  \tau_3 |^2  = 18 \| T_{(3)} \|^2, \qquad |  \tau_2 |^2 = 18 \| T_{(2)} \|^2.
$$
\end{lemma}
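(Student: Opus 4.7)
The plan is to express $d\varphi$ and $d\star\varphi$ directly in terms of $T$ using the formula $\nabla\varphi = 3\sum_{i,j\in\mathbb{Z}_7} T_{j\,i}\, e_i^*\otimes(e_j\lrcorner\star\varphi)$ recorded above, then read off $\tau_0,\tau_1,\tau_2,\tau_3$ as explicit linear functions of the components $T_{(1)},T_{(4)},T_{(2)},T_{(3)}$ respectively, and finally translate the identities $|\tau_i|^2 = c_i\|T_{(i)}\|^2$ via the norm formulas of Section~2. Since the map $T\mapsto\nabla\varphi$ is $\Lie G_2$-equivariant, Schur's lemma guarantees that only $T_{(1)}$ contributes to $\tau_0$, only $T_{(4)}$ to $\tau_1$, only $T_{(2)}$ to $\tau_2$ and only $T_{(3)}$ to $\tau_3$; this is the Fernández--Gray classification pattern summarised in Table~I of \cite{FMC0}. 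The calculations then reduce to determining a single universal proportionality constant on each irreducible $\Lie G_2$-module.

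The identity for $\tau_0$ is the most direct to verify: taking the pointwise inner product of $d\varphi = \tau_0\star\varphi + 3\tau_1\wedge\varphi + \star\tau_3$ with $\star\varphi$ annihilates the last two summands (they are orthogonal to $\star\varphi$) and gives $7\tau_0 = (d\varphi,\star\varphi)$. Substituting the antisymmetrised formula for $d\varphi$ and using the elementary identity $\sum_{i\in\mathbb Z_7} e_i^*\wedge(e_i\lrcorner\star\varphi) = 4\star\varphi$ leaves only the trace of $T$, yielding $\tau_0 = \tfrac{12}{7}\sigma_1(T)$. Combined with \eqref{a1}, this produces the first identity. For $\tau_1$ one specialises to $T = T_{(4)} = \tfrac16 A_{p(T)}$ and computes $\tau_1 = \tfrac12 (p(T))^\flat$ (up to sign); using $\|p(T)\|^2 = 6\|T_{(4)}\|^2$ established in Lemma~\ref{leia2} immediately gives $|\tau_1|^2 = \tfrac14\|p(T)\|^2 = \tfrac32\|T_{(4)}\|^2$. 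The identities for $\tau_2$ and $\tau_3$ proceed analogously, using a test calculation on $\lie g_2\cong\Lambda^2_{(14)}T^*M$ for $\tau_2$ and on $S_0^2\,T^*M\cong\Lambda^3_{(27)}T^*M$ for $\tau_3$; the universal factor $18$ is then forced by a single norm comparison against the corresponding component of $T$.

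The main obstacle is bookkeeping rather than conceptual. One must track several independent numerical factors: the factor $3$ in the formula for $\nabla\varphi$; the proportionality constants in the standard $\Lie G_2$-isomorphisms identifying $\Lambda^2_{(14)}T^*M$ with $\lie g_2$, $\Lambda^3_{(27)}T^*M$ with $S_0^2\,T^*M$, and $\lie g_2^\perp$ with $\mathbb R^7$; and the distinction between the form inner product $(\cdot,\cdot)$ used to define the $|\tau_i|^2$ and the tensor inner product $\langle\cdot,\cdot\rangle$ used for $\|T_{(i)}\|^2$. A misplaced factorial in any of these normalisations shifts the stated coefficients, so the bulk of the work is a careful normalisation check on each of the four irreducibles.
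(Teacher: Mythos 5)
Your proposal is essentially sound and follows the same overall strategy as the paper: express $d\varphi$ and $d\star\varphi$ in terms of $T$ via $\nabla\varphi = 3\sum_{i,j} T_{j\,i}\, e_i^*\otimes(e_j\lrcorner\star\varphi)$, identify $\tau_0,\tau_1,\tau_2,\tau_3$ with the corresponding $\Lie G_2$-components, and compare norms. Your treatment of $\tau_0$ (pairing with $\star\varphi$, which kills the $(4)$- and $(3)$-components and gives $7\tau_0=(d\varphi,\star\varphi)=12\sigma_1(T)$) and of $\tau_1$ (yielding $\tau_1=\pm\tfrac12 p(T)^\flat$ and then $|\tau_1|^2=\tfrac14\|p(T)\|^2=\tfrac32\|T_{(4)}\|^2$ via $\|p(T)\|^2=6\|T_{(4)}\|^2$) matches the paper's computation. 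Where you genuinely diverge is in invoking Schur's lemma to reduce $|\tau_2|^2$ and $|\tau_3|^2$ to a single universal constant determined by one test element on each irreducible; the paper instead writes out $\star\tau_3 = d\varphi - (d\varphi)_{(1)} - (d\varphi)_{(4)}$ and $\tau_2\wedge\varphi = d\star\varphi - (d\star\varphi)_{(4)}$ in full in a Cayley frame for a general $T$, computes the norms termwise, and matches them against the explicit formulas \eqref{comp3norm} and \eqref{comp2norm} (obtaining along the way the clean identity $\tau_2 = 6\,T_{(2)}^\flat$). Your route is conceptually cleaner and less error-prone in principle, but note that as written you never actually carry out the test calculations: the factor $18$ for both $\tau_2$ and $\tau_3$ is asserted, not derived, and this is precisely where the normalisation pitfalls you yourself flag (the factor $3$, the factor $2!$ between $|\cdot|$ on two-forms and $\|\cdot\|$ on endomorphisms, the constants in the module isomorphisms) would bite. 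To be complete you would need to exhibit, say, one nonzero $a\in\lie g_2$ and one $a\in S_0^2$ and evaluate both sides; until then the last two identities remain unverified claims rather than proved ones.
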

\begin{proof}
In \cite{FMC0}, the exterior derivative $d \varphi$ is expressed as 
\begin{align}
 d\varphi  = & - 3 \textstyle \sum_{i\in \mathbb Z_7} (T_{i+2\,i+2} + T_{i+4\,i+4} + T_{i+5\,i+5} + T_{i+6\,i+6})  e_{i+2}^* \wedge e_{i+4}^* \wedge e_{i+5}^* \wedge e_{i+6}^*  \nonumber\\
&  +  3\textstyle  \sum_{i\in \mathbb Z_7} (  T_{i+3\,i+1} + T_{i+5\,i+4}+ T_{i+6\,i+2} ) e_{i}^* \wedge  e_{i+1}^* \wedge e_{i+2}^* \wedge e_{i+4}^* \nonumber\\
&  +  3\textstyle \sum_{i\in \mathbb Z_7} (- T_{i+1\,i+3} - T_{i+4\,i+5} + T_{i+6\,i+2} )  e_{i}^* \wedge   e_{i+2}^* \wedge e_{i+3}^* \wedge e_{i+5}^*\label{extder}\\
&  +  3\textstyle \sum_{i\in \mathbb Z_7} (- T_{i+1\,i+3} +T_{i+5\,i+4} - T_{i+2\,i+6} )  e_{i}^* \wedge   e_{i+3}^* \wedge e_{i+4}^* \wedge e_{i+6}^*\nonumber \\
&  + 3  \textstyle \sum_{i\in \mathbb Z_7} ( T_{i+3\, i+1} - T_{i+4\,i+5} -T_{i+2\,i+6} )  e_{i}^* \wedge   e_{i+5}^* \wedge e_{i+6}^* \wedge e_{i+1}^*. \nonumber
\end{align}

 Denoting  alternation by $\mathrm{alt}$, from   $(d \varphi)_{(1)} = \mathrm{alt} (\nabla \varphi)_{(1)}= \tau_0 \star \varphi =  \frac{12}{7} \sigma_1(T)\star \varphi$ and $(d \varphi)_{(4)} = \mathrm{alt} (\nabla \varphi)_{(4)} =3 \tau_1 \wedge  \varphi =  - \frac{9}{6} p(T)^\flat \wedge  \varphi$, it is obtained   
$
 \tau_0  = \tfrac{12}{7} \sigma_1(T)$,  $\tau_1 = -\tfrac12 p(T)^\flat,  
$
where it is used the musical isomorphism $x^\flat(y) = \langle x, y\rangle$, for  $x,y \in \mathrm T_m M$, $m\in M$.
Therefore, {\small
\begin{align*}
\star \tau_3  = & (d\varphi)_{(3)} = d \varphi - (d \varphi)_{(1)} - (d \varphi)_{(4)}\\
= & - 3\textstyle \sum_{i\in \mathbb Z_7} \left(T_{i+2\,i+2} + T_{i+4\,i+4} + T_{i+5\,i+5}+ T_{i+6\,i+6}- \tfrac47 \sigma_1(T) \right)  e_{i+2}^* \wedge e_{i+4}^* \wedge e_{i+5}^* \wedge e_{i+6}^*  \\ 
&  +   \tfrac32\textstyle  \sum_{i\in \mathbb Z_7} (  T_{i+1\,i+3} + T_{i+3\,i+1} +  T_{i+4\,i+5} + T_{i+5\,i+4}+T_{i+2\,i+6} + T_{i+6\,i+2} ) e_{i}^* \wedge  e_{i+1}^* \wedge e_{i+2}^* \wedge e_{i+4}^*\\
&  + \tfrac32 \textstyle    \sum_{i\in \mathbb Z_7} (-T_{i+1\,i+3} - T_{i+3\,i+1}  -T_{i+4\,i+5} -T_{i+5\,i+4} +T_{i+2\,i+6}+ T_{i+6\,i+2} )  e_{i}^* \wedge   e_{i+2}^* \wedge e_{i+3}^* \wedge e_{i+5}^*\\
&  +  \tfrac32\textstyle  \sum_{i\in \mathbb Z_7} (- T_{i+1\,i+3} - T_{i+3\,i+1} + T_{i+4\,i+5} +T_{i+5\,i+4} - T_{i+2\,i+6} - T_{i+6\,i+2} )   e_i^* \wedge   e_{i+3}^* \wedge e_{i+4}^* \wedge e_{i+6}^*\\
&  + \tfrac32 \textstyle  \sum_{i\in \mathbb Z_7} (T_{i+1\,i+3} + T_{i+3\,i+1}- T_{i+4\,i+5} -T_{i+5\,i+4} -T_{i+2\,i+6} - T_{i+6\,i+2} )  e_{i}^* \wedge   e_{i+5}^* \wedge e_{i+6}^* \wedge e_{i+1}^*.
\end{align*}  }

By one hand,  one has 
$
\tau_0^2 = \frac{12^2}{7^2} \sigma_1(T)^2 = \frac{12^2}{7} \| T_{(1)}\|^2$, and $ |\tau_1|^2 = \frac14 \|p(T)\|^2 = \frac32 \|T_{(4)}\|^2.
$

On the other hand, {\small
 \begin{align*}
| \star \tau_3 |^2 = & \, 9\textstyle  \sum_{i\in \mathbb Z_7}  \left((T_{i+1 \, i+3} + T_{i+3 \, i+1})^2 +  (T_{i+4 \, i+5} + T_{i+5 \, i+4})^2 +  (T_{i+2 \, i+6} + T_{i+6 \, i+2})^2\right)\\
 &  + 9\textstyle  \sumz \left( ( T_{i+2 \, i+2} - \tfrac17 \sigma_1(T) )^2 +  ( T_{i+4 \, i+4} - \tfrac17 \sigma_1(T) )^2\right) \\
& + 9\textstyle \sumz \left( ( T_{i+5 \, i+5} - \tfrac17 \sigma_1(T) )^2 +  ( T_{i+6 \, i+6} - \tfrac17 \sigma_1(T) )^2 \right) \\
 &+ 9 \, . \, 4\textstyle  \sumz (T_{i+1\, i+1} - \tfrac17 \sigma_1(T) )  (T_{i+3\, i+3} - \tfrac17 \sigma_1(T) )\\
& +  9 \, . \, 4\textstyle  \sumz (T_{i+4\, i+4} - \tfrac17 \sigma_1(T) )  (T_{i+5\, i+5} - \tfrac17 \sigma_1(T)) \\
&  +  9 \, . \,  4\textstyle  \sumz (T_{i+2\, i+2} - \tfrac17 \sigma_1(T) )  (T_{i+6\, i+6} - \tfrac17 \sigma_1(T)). 
\end{align*}}

Now, taking \eqref{comp3norm} into account, one has    
\begin{align*}
| \star \tau_3 |^2 = & \, 18\, . \,  \frac12 \textstyle \sum_{i\in \mathbb Z_7}  \left(  (T_{i+1 \, i+3} + T_{i+3 \, i+1})^2 +  (T_{i+4 \, i+5} + T_{i+5 \, i+4})^2 +  (T_{i+2 \, i+6} + T_{i+6 \, i+2})^2\right)\\
 &  + 9 \, . \, 4 \textstyle \sumz \left(  T_{i \, i} - \tfrac17 \sigma_1(T)  \right)^2 + 9 \, . \, 2 \sum_{i, j \in \mathbb Z_7 \, i\ne j}(T_{i\, i} - \tfrac17 \sigma_1(T) )  (T_{j\,j } - \tfrac17 \sigma_1(T) )\\= & 18 \| T_{(3)} \|^2 + 18  
 \left(\textstyle \sumz  \left(  T_{i \, i} - \tfrac17 \sigma_1(T)  \right)\right)^2 = 18 \| T_{(3)} \|^2.
 \end{align*}
 Hence $|  \tau_3 |^2 =| \star \tau_3 |^2 = 18 \| T_{(3)} \|^2$.
 \vspace{2mm}
 
 In \cite{FMC0}, it is shown an expression for $d^\star \varphi = - \star d \star \varphi$.  From such an expression one has    
 \begin{eqnarray}
d \star \varphi & = &  3\textstyle  \sum_{i \in \mathbb Z_7} \left( T_{i+1\, i+3} - T_{i+3\, i+1} -p(T)_i \right) \star(e_{i+1}^*\wedge e_{i+3}^*) \nonumber \\
&&+ 3\textstyle  \sum_{i \in \mathbb Z_7} \left( T_{i+4\, i+5} - T_{i+5\, i+4} - p(T)_i  \right) \star(e_{i+4}^*\wedge e_{i+5}^*) \label{extderstar} \\
&& + 3\textstyle  \sum_{i \in \mathbb Z_7} \left( T_{i+2\, i+6} - T_{i+6\, i+2} - p(T)_i  \right) \star(e_{i+2}^*\wedge e_{i+6}^*), \nonumber 
\end{eqnarray}
where $p(T)_i = T_{i+1\, i+3} - T_{i+3\, i+1} + T_{i+4\, i+5} - T_{i+5\, i+4} + T_{i+2\, i+6} - T_{i+6\, i+2}$. 
Since 
\begin{equation} \label{extderstar4}
(d \star \varphi)_{(4)} = 4 \tau_1 \wedge \star \varphi = - 2 p(T)^\flat \wedge  \star  \varphi,
\end{equation}
the remaining $\Lie G_2$-component  of $d \star \varphi$ is given by 
$$
(d \star \varphi)_{(2)} =  \tau_2 \wedge \varphi =  d \star \varphi - (d \star \varphi)_{(4)}.
$$ 
Therefore,
\begin{eqnarray*}
(d \star \varphi)_{(4)}  & = & - 2 \textstyle  \sum_{i \in \mathbb Z_7} p(T)_i \star (e_{i+1}^*\wedge e_{i+3}^* + e_{i+4}\wedge e_{i+5}^* + e_{i+2}^*\wedge e_{i+6}^*)  
\end{eqnarray*}
and 
 \begin{eqnarray}
(d \star \varphi)_{(2)}  & = & \textstyle \sum_{i \in \mathbb Z_7} \left( 3 (T_{i+1\, i+3} - T_{i+3\, i+1} )-  p(T)_i  \right) \star(e_{i+1}^*\wedge e_{i+3}^*)  \nonumber \\
&&+ \textstyle  \sum_{i \in \mathbb Z_7} \left( 3(T_{i+4\, i+5} - T_{i+5\, i+4} )- p(T)_i \right) \star(e_{i+4}^*\wedge e_{i+5}^*) \label{extderstar2} \\
&& + \textstyle  \sum_{i \in \mathbb Z_7} \left( 3(T_{i+2\, i+6} - T_{i+6\, i+2}) -p(T)_i  \right) \star(e_{i+2}^*\wedge e_{i+6}^*).\nonumber
\end{eqnarray}
Hence it is obtained
\begin{eqnarray*}
\tau_2  & = & 
 \frac12 \textstyle  \sum_{i \in \mathbb Z_7} \left( 6 (T_{i+1\, i+3} - T_{i+3\, i+1} )- 2 p(T)_i  \right) e_{i+1}^*\wedge e_{i+3}^*  \\
&&+  \frac12  \textstyle  \sum_{i \in \mathbb Z_7} \left( 6(T_{i+4\, i+5} - T_{i+5\, i+4} )- 2 p(T)_i  \right) e_{i+4}^*\wedge e_{i+5}^*  \\
&& +\frac12 \textstyle  \sum_{i \in \mathbb Z_7} \left(  6(T_{i+2\, i+6} - T_{i+6\, i+2}) - 2p(T)_i  \right) e_{i+2}^*\wedge e_{i+6}^*.
\end{eqnarray*}
In summary, $ \tau_2 = 6 T_{(2)}^\flat$, where $T_{(2)}^\flat(x, y) = \langle x,  T_{(2)}(y)\rangle$.
From this, it is computed
\begin{eqnarray*}
| \tau_2 |^2 & = &\textstyle \sumz (3 (T_{i+1\, i+3} - T_{i+3\, i+1} ) -  p(T)_i)^2  +   \sumz (3 (T_{i+4\, i+5} - T_{i+5\, i+4} ) -  p(T)_i)^2 \\
                   & & + \textstyle \sumz (3 (T_{i+2\, i+6} - T_{i+6\, i+2} ) -  p(T)_i)^2.
\end{eqnarray*}
This implies $|\tau_2|^2 = 36 | T_{(2)}^\flat |^2 = 18  \| T_{(2)}\|^2$. 
 \end{proof}
 Replacing the values for $\tau_i$ given in last Lemma, one found that Bryant's formula \eqref{eq:Bryant1} is agree with the one given in \eqref{formTT}. Likewise, writing the divergence equation  \eqref{eq:Bryant2} in terms of the components  $T_{(i)}$, it is checked that they are agree with \eqref{divformTT}. 
 \vspace{3mm}

 In \cite{FI}, Friedrich and Ivanov  considered types of $G$-structures which admit   $G$-connection with totally skew-symmetric torsion. They showed that for $G=\Lie G_2$ such assumption is satisfied if and only if the  $G_2$-structure is of type $\mathfrak{X}_1\oplus\mathfrak{X}_3\oplus\mathfrak{X}_4$. In fact, they proved that for such a type of $G_2$-structure, there is only one  $\Lie G_2$-connection with totally skew-symmetric torsion.  Such a torsion $\mathcal  T$ is given  (following the  conventions fixed here) by 
\begin{equation}\label{eq:torsion}
\mathcal T=- \frac{1}{6}(d\varphi,\star\varphi)\varphi+\star d\varphi+ 2 \star(p(T)^{\flat}\wedge\varphi).
\end{equation} 
In order to compute $| \mathcal T|^2$, we consider the identity 
$$
\star  \mathcal T =- \frac{1}{6}(d\varphi,\star\varphi)\star \varphi+ d\varphi+  2 p(T)^{\flat}  \wedge\varphi.
$$
From this, one has 
$$
\star  \mathcal T=- 2 \sigma_{1}(T) \star \varphi+ (d\varphi)_{(1)} + (d\varphi)_{(3)} + (d\varphi)_{(4)} + 2 p(T)^{\flat}  \wedge\varphi. 
$$
Therefore, it is finally obtained 
$$
\star \mathcal T =- \frac27  \sigma_{1}(T) \ast \varphi +\star \tau_3 +  \frac12 p(T)^{\flat}  \wedge\varphi 
$$
and 
$$
| \mathcal T|^2=  | \star  \mathcal T|^2 = \frac47  \sigma_{1}(T)^2  + | \tau_3|^2 +  \|p(T)\|^2.  
$$
This is equivalent to 
\begin{equation} \label{skewtor}
| \mathcal T|^2= 4  \| T_{(1)}\|^2  + 18 \| T_{(3)}\|^2 +  6  \| T_{(4)}\|^2 .  
\end{equation}
Friedrich and Ivanov have also deduced a divergence equation \cite{FI2} given by   
\begin{equation}\label{eq:scalarskewtorsion}
s=\frac{1}{18}(d\varphi,\star\varphi)^2+4\|p(T)\|^2-\frac{1}{2}|\mathcal T |^2+ 6 \, {\rm div}(p(T)).
\end{equation}
Note that $6 |\mathcal T |^2 = \| \mathcal T \|^2$. Now, taking $(d\varphi,\star\varphi)^2 = \frac{12^2}{7^2} \sigma_1(T)^2 =  \frac{12^2}{7} \|T_{(1)} \|^2$, $\|p(T)\|^2 = 6 \| T_{(4)}\|^2$ and equation \eqref{skewtor}   into account , it can be seen that 
the divergence equations \eqref{divformTT} and \eqref{eq:scalarskewtorsion}  are agree.


\section{The intrinsic torsion in terms of the exterior derivatives $d \varphi$ and $d \star \varphi$}

 All information about the intrinsic torsion of a $\Lie G_2$ is contained in the covariant derivative $\nabla \varphi$. A useful alternative way to find such information is by means the exterior algebra. This is possible for $\Lie G_2$-structures by studying $d \varphi$ and $d\star \varphi$ as it is described  in \cite{MMS} (see Table I in \cite{FMC0}). Because we have already displayed all the necessary tools, now we  will express the intrinsic torsion $\xi$ in terms of $d \varphi$ and of  $d \star \varphi$.  	
\vspace{3mm}

For $T_{(1)}$ and $\xi_{(1)}$, from the expression \eqref{extder} for $d \varphi$, one has
$$
( d \varphi , \star \varphi) = 3 \textstyle  \sumz (T_{i+2,i+2} + T_{i+4,i+4} + T_{i+5,i+5} + T_{i+6,i+6}) = 
12 \sigma_1(T).  
$$
Hence, taking \eqref{a1} into account, it is obtained
$$
T_{(1)} = \frac{1}{84} ( d \varphi , \star \varphi) \mathrm{Id}_{\mathrm T M}, \qquad \xi_{(1)X} Y= \frac{1}{84} ( d \varphi , \star \varphi) \, Y \times X, 
$$

For $\xi_{(4)}$, one considers the one-form $pd^\star\varphi = \star (\star d\varphi \wedge \varphi)= - \star (\star d \star \varphi \w \star \varphi)$ and we know that $p(T) = \frac16  (pd^\star\varphi)^\sharp$ by \eqref{peT}. Hence, takinq \eqref{a4}  into account, it is obtained
$$
T_{(4)} = \frac1{36} A_{(pd^\star\varphi)^\sharp}, \qquad \xi_{(4)X} Y= \frac1{36} Y \times A_{(pd^\star\varphi)^\sharp}(X) = \frac1{36} \, Y \times (X \times (pd^\star\varphi)^\sharp).  
$$
\vspace{2mm}

For $T_{(3)}$ and $\xi_{(3)}$, for all $i \in \mathbb Z_7$, one has $\xi_{(3)X}Y = Y \times T_{(3)} (X)$, where
\begin{eqnarray*}  
T_{(3)}& = & \frac16\textstyle  \sumz \left(d\varphi , e_i^*\w (e_i \lrcorner \star \varphi)   -\tfrac{4}{7} \star \varphi\right) e_i\otimes e_i^*   \\
  & & +\frac1{12} \textstyle  \sum_{i, j \in \mathbb Z_7, \, i\ne j}   \left( d\varphi , e_{i}^* \wedge (e_{j} \lrcorner \star \varphi) +  e_{j}^* \wedge (e_{i} \lrcorner \star \varphi )\right) (e_i \otimes e_j^* + e_j \otimes e_i^*).
\end{eqnarray*}

All of this can be checked by using the identity  \eqref{extder} and taking \eqref{a3dos} into account.
Note that, for all $i \in \mathbb Z_7$,   
\begin{eqnarray*}
  (p d^\star \varphi)_i^\sharp & = &  6 p(T)_i  \, = \, - \left( d\varphi , e_{i+1}^* \wedge (e_{i+3} \lrcorner \star \varphi) -  e_{i+3}^* \wedge (e_{i+1} \lrcorner \star \varphi )\right) 
\\
& = & 
- \left( d\varphi , e_{i+4}^* \wedge (e_{i+5} \lrcorner \star \varphi) -  e_{i+5}^* \wedge (e_{i+4} \lrcorner \star \varphi )\right) 
\\
 & = & - \left( d\varphi , e_{i+2}^* \wedge (e_{i+6} \lrcorner \star \varphi) -  e_{i+6}^* \wedge (e_{i+2} \lrcorner \star \varphi )\right).
\end{eqnarray*}
\vspace{3mm}
	
Finally, for $T_{(2)}$ and $\xi_{(2)}$, one has $\xi_{(2)X}Y = Y \times T_{(2)} (X)$, where
\begin{eqnarray*}  
T_{(2)} &= & \frac1{12} \textstyle  \sum_{i , j \in Z_7, \, i\ne j  } \left( d\star \varphi + \tfrac13 p d^\star \varphi \wedge \star \varphi  , \star (e_{i}^* \wedge e_{j}^*)\right) (e_i \otimes  e_j^* - e^j \otimes e_i^*).
\end{eqnarray*}
This can be checked  by using the identities  \eqref{extderstar}, \eqref{extderstar4}  and \eqref{extderstar2}, and  taking \eqref{a2uno} into account.
\vspace{3mm}

Conversely,  if we use the map $\mathsf k : \mathrm T M \otimes \mathrm T^* M \to \Lambda^4 \mathrm T^* M $, defined by 
$$
a = \textstyle  \sum_{i,j \in \mathrm Z_7}   a_{i\, j} e_i \otimes e_j^* 
\lto  \mathsf k (a) = \sum_{i,j \in \mathrm Z_7}  a_{i\, j} e_i^* \w (e_j \lrcorner \star \varphi), 
$$
we will have $(d\varphi)_{(1)} = 3 \, \mathsf k(T_{(1)})$,    $(d\varphi)_{(3)} = 3 \,\mathsf k(T_{(3)})$, and  $(d\varphi)_{(4)} = 3 \, \mathsf k(T_{(4)})$. Note that  $\mathsf k(T_{(2)}) =0$.

Now we consider  the map $\mathsf m : \mathrm T^* M \otimes \mathrm T M \to \Lambda^5 \mathrm T^* M $, defined by 
$$
a =\textstyle  \sum_{i,j \in \mathrm Z_7}   a_{i\, j} e_i \otimes e_j^*
\lto  \mathsf m (a) = \sum_{i,j \in \mathrm Z_7}  a_{i\, j} \star (e_i^* \w e_j^*). 
$$
It is obtained  $(d\star \varphi)_{(2)} = 3 \, \mathsf m(T_{(2)})$ and    $(d\star \varphi)_{(4)} = 3 \,\mathsf m(T_{(4)})$. Note that  $\mathsf m(T_{(1)}) = \mathsf m(T_{(3)}) =0$. 
\vspace{3mm}

The map $\mathsf k$    is closely related with  map (2.17) in \cite{RB} given by $\mathsf i \, : \, \mathrm T M \otimes \mathrm T^* M \lto \Lambda^3  T^* M    $, 
$$
a = \textstyle  \sum_{i,j \in \mathrm Z_7}   a_{i\, j} e_i \otimes e_j^* \lto \mathsf i(a)  = \sum_{i,j \in \mathrm Z_7}  a_{i\, i}  e_i^*\wedge (e_j \lrcorner \varphi)  
$$
In fact, it can be  checked that
$$
\mathsf k(a) + \star   \mathsf i(a^t) = \sigma_1(a) \star \varphi, 
$$ 
or, equivalently,   
$$
\star \mathsf k(a) +    \mathsf i(a^t) = \sigma_1(a)  \varphi, 
$$
for all $a \in \mathrm T M \otimes \mathrm T^* M$, where $a^t = \sum_{i,j \in \mathrm Z_7}   a_{j\, i} e_i \otimes e_j^*$.  This lead us to claim  that for two vectors $x, y  \in  \mathrm T_m M$, $m \in M$, it is satisfied the identities
$$
\star( x^* \wedge (y \lrcorner \star \varphi)) + y^* \wedge (x \lrcorner \varphi) = \langle x, y\rangle \varphi,
$$   
and
$$
 x^* \wedge (y \lrcorner \star \varphi) + \star (y^* \wedge (x \lrcorner \varphi) )= \langle x, y\rangle \star \varphi,
$$   
where, for $x= \sumz x_i e_i$, one defines $x^*= \sumz x_i e_i^*$. 
\vspace{2mm}
 
It is also interesting to have a look to map (2.18) in \cite{RB}. It is given by   $\mathsf j : \Lambda^3 \mathrm  T^*M \lto \mathrm T M \otimes \mathrm T^* M$ with
$$\mathsf j(\gamma) =\textstyle  \sum_{i,j \in \mathbb Z_7}\star ((e_i \lrcorner \varphi) \wedge (e_j \lrcorner \varphi) \wedge \gamma) \, e_i \otimes e_j^*.$$ 
  The image of $\mathsf j$ is the set of symmetric endomorphisms, i.e.    $\mathrm{Im} \, \mathsf j = \mathfrak X_1 \oplus \mathfrak X_3$. For all $a \in \mathfrak X_1 \oplus \mathfrak X_3\subset \mathrm TM \otimes \mathrm T^* M$, it is satisfied
 $$
 \mathsf j ( \mathsf i (a) ) = -4 a - 2 \sigma_1(a) \mathrm{Id}.
 $$  
In particular, one has  $\mathsf j ( \mathsf i ( \mathrm{Id} ))= - 18  \mathrm{Id}$, $\mathsf i (\mathrm{Id}) = 3 \varphi$ and $\mathsf j(\varphi) = - 6 \mathrm{Id}$. On the other hand, for all $\gamma \in \lie X_1 \oplus \lie X_3 \subset  \Lambda^3\mathrm T^* M$, it is satisfied
$$
 \mathsf i ( \mathsf j (\gamma) ) = -4 \gamma - 2 (\gamma, \varphi) \varphi.
$$
This is deduced by denoting $\gamma = \mathsf i(a)$, checking $(\gamma, \varphi) = 3 \sigma_1(a)$ and using the previous identity for $ \mathsf j ( \mathsf i (a) )$. By similar arguments, for all $a  \in \mathfrak X_1 \oplus \mathfrak X_3\subset \mathrm TM \otimes \mathrm T^* M$ and $\gamma  \in \mathfrak X_1 \oplus \mathfrak X_3\subset \Lambda^3 \mathrm T^* M$,  it can be deduced the identities
$$
\mathsf j (\star \mathsf k(a)) = - 4 a - 4 \sigma_1 (a) \mathrm{Id},  
\qquad 
 \mathsf k(\mathsf j (\gamma)) = 4 \star \gamma + \frac73 (\gamma , \varphi) \star \varphi.  
$$


\begin{thebibliography}{99}

\setlength{\baselineskip}{0.4cm}

\bibitem{Boothby}
 W. M. Boothby, \emph{An introduction to differentiable manifolds and Riemannian geometry}, Revised (Vol. 120), Gulf Professional Publishing  (2003).

\bibitem{BHLS1} G.~Bör and L.~Hern{\'a}ndez Lamoneda, {\rm Bochner formulae for orthogonal
$G$-structures on compact manifolds}, \emph{Diff. Geom. Appl. }{\bf 15} (2001), 265--286.

\bibitem{BG} R. Brown, A. Gray, Vector cross product, \emph{Comm. Math. Helv.} 42 (1967), 222--236.

\bibitem{RB} R. L. Bryant, Some remarks on $G_2$--structures, In:\emph{ Proceedings of Gokova Geometry--Topology Conference}, Gokova, 2005, 75--109.

\bibitem{CleytonSwann:torsion}
R.~Cleyton and A.~F.~Swann, {E}instein metrics via intrinsic or
parallel torsion, \emph{Math. Z.} 247 no.~3 (2004), 513--528.

\bibitem{FG} M. Fern\'{a}ndez, A. Gray, Riemannian manifolds with structure group 
$\Lie G_2$, \emph{ Ann. Mat. Pura Appl.} (4) 132 (1982), 19--45. 

\bibitem{FI} T. Friedrich, S. Ivanov, Parallel spinors and connections with skew-symmetric torsion in string theory,\emph{Asian J. Math.} 6 (2002), no. 2, 303--335.

\bibitem{FI2} T. Friedrich, S. Ivanov, Killing spinor equations in dimension 7 and geometry of integrable $\Lie G_2$-manifolds, \emph{J. Geom. Phys.} 48 (2003), no. 1, 1--11. 

\bibitem{Gray:vector}
 A. Gray, Vector cross products on manifolds, \emph{Trans. Am. Math.
Soc.} 141 (1969), 465--504.

\bibitem{KobNom}
S. Kobayashi, K.  Nomizu, \emph{Foundations of differential geometry} (Vol. 1, No. 2). New York, London   (1963).

\bibitem{FMC0} F. Martín Cabrera, $\Lie{Spin}(7)$-structures on principal bundles over Riemannian manifolds with
$\Lie G_2$-structure, \emph{ Rend. Circ. Mat. Palermo} (2) 44 (1995),  249--272.


\bibitem{FMC00} 
F.  Martín Cabrera, On Riemannian manifolds with $\Lie G_2$-structure, \emph{ Boll. Unione Mat. It.} (7)
9-A  (1996), 99--112


\bibitem{FMC} F. Martín Cabrera, $\Lie{SU}(3)$--structures on hypersurfaces of manifolds with $G_2$--structure, \emph{Monatsh. Math.} 148 (2006), 29--50.

\bibitem{MMS} 
F.  Martín Cabrera, M.  D. Monar, Classification of   $\Lie G_2$-structures, \emph{J. London Math. Soc.} 53
  (1996), 407--416.


\bibitem{KN2} K. Niedzia\l omski, An integral formula for  $\Lie{G}_2$--structures, \emph{J. Geom. Phys.}  (to appear).

\bibitem{ON}
B. O'Neill,   \emph{Semi-Riemannian Geometry with Applications to Relativity}: Volume 103 (Pure and Applied Mathematics) Academic Press, New York (1983).
\end{thebibliography}
\end{document}